\newtheorem{theorem}{Theorem}[section]
\newtheorem{corollary}[theorem]{Corollary}
\newtheorem{proposition}[theorem]{Proposition}
\newtheorem{lemma}[theorem]{Lemma}
\newtheorem{question}[theorem]{Question}
\newtheorem{problem}[theorem]{Problem}
\theoremstyle{definition}
\newtheorem{definition}[theorem]{Definition}
\newtheorem{remark}[theorem]{Remark}
\newtheorem{example}[theorem]{Example}
\begin{document}
\title[The Transversality on locally pseudocompact groups]
{The Transversality on locally pseudocompact groups}

  \author{Fucai Lin}
  \address{(Fucai Lin): School of mathematics and statistics,
  Minnan Normal University, Zhangzhou 363000, P. R. China}
  \email{linfucai2008@aliyun.com; linfucai@mnnu.edu.cn}

  \author{Zhongbao Tang}
  \address{(Zhongbao Tang): School of mathematics and statistics,
  Minnan Normal University, Zhangzhou 363000, P. R. China}
  \email{tzbao84@163.com}

  \thanks{The authors are supported by the NSFC (No. 11571158), the Natural Science Foundation of Fujian Province (No. 2017J01405) of China, the Program for New Century Excellent Talents in Fujian Province University, the Institute of Meteorological Big Data-Digital Fujian and Fujian Key Laboratory of Data Science and Statistics.}

  \keywords{transversal group topology; locally pseudocompact group; locally compact group; locally precompact; connected space; central topological group.}
  \subjclass[2000]{primary 22A05, 54H11; secondary 54A25, 54A35, 54G20}

  \begin{abstract}
Two non-discrete Hausdorff group topologies $\tau, \delta$ on a group $G$ are called {\it transversal} if the least
upper bound $\tau\vee \delta$ of $\tau$ and $\delta$ is the discrete topology. In this paper, we discuss the existence of transversal group topologies on locally pseudocompact, locally precompact or locally compact groups. We prove that each locally pseudocompact, connected topological group satisfies CSP, which gives an affirmative answer to a problem posed by Dikranjan,
Tkachenko and Yaschenko in 2006. For a compact normal subgroup $K$ of a locally compact totally disconnected group $G$, if $G$ admits a transversal group topology then $G/K$ admits a transversal group topology, which give a partial answer again to a problem posed by Dikranjan, Tkachenko and Yaschenko in 2006. Moreover, we characterize some classes of locally compact groups that admit transversal group topologies.
  \end{abstract}

 \maketitle
\section{Introduction}
Two non-discrete Hausdorff group topologies $\tau, \delta$ on a group $G$ are called {\it transversal} if the least
upper bound $\tau\vee \delta$ of $\tau$ and $\delta$ is the discrete topology. Indeed, the transversality, which was introduced in \cite{TTWY2000}, is a ``half'' of the stronger concept of complementarity introduced by Birkhoff \cite{B1936}. The study of transversal group topologies on abelian groups was initiated in \cite{DTY2005} and \cite{ZP2001}, and the authors in \cite{DTY2005} proved that any precompact
topological group does not admit a transversal group topology. In \cite{DTY2006}, D. Dikranjan, M. Tkachenko and I. Yaschenko continued the study of transversal group topologies on non-abelian groups; in particular, they obtained the following important duality principle for transversal topologies.

\begin{theorem}\cite[Theorem 2.2]{DTY2006}
Let $\tau_{1}, \tau_{2}$ be transversal group topologies on an infinite group $G$. Then $$\triangle(G, \tau_{1})\leq in(G, \tau_{2})\ \mbox{and}\ \triangle(G, \tau_{2})\leq in(G, \tau_{1}),$$
where $\triangle(G)$ and $in(G)$ denote by the dispersion character and index of narrowness of $G$ respectively.
\end{theorem}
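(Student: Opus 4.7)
The plan is to prove the inequality $\triangle(G,\tau_1)\le in(G,\tau_2)$; the other inequality follows by symmetry, swapping the roles of $\tau_1$ and $\tau_2$. The strategy is to exhibit a single $\tau_1$-open neighborhood $U$ of the identity whose cardinality is bounded by $\kappa:=in(G,\tau_2)$. This suffices, since the dispersion character $\triangle(G,\tau_1)$ is no larger than the size of any non-empty $\tau_1$-open set. The two ingredients to combine are transversality (which furnishes a $\tau_1$-neighborhood and a $\tau_2$-neighborhood meeting only at $e$) and narrowness of $(G,\tau_2)$ (which covers $G$ by $\kappa$-many translates of a prescribed $\tau_2$-neighborhood).

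First I would invoke transversality: since $\tau_1\vee\tau_2$ is discrete, there exist a $\tau_1$-open neighborhood $U_0$ of $e$ and a $\tau_2$-open neighborhood $V_0$ of $e$ with $U_0\cap V_0=\{e\}$. Using continuity of the group operations in each topology, I would choose symmetric open neighborhoods $U\in\tau_1$ of $e$ with $UU\subseteq U_0$ and $W\in\tau_2$ of $e$ with $WW\subseteq V_0$. By the definition of the index of narrowness applied to $W$, there is a set $F\subseteq G$ with $|F|\le\kappa$ such that $G=FW$.

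The core estimate is that $|U\cap fW|\le 1$ for every $f\in F$. Indeed, if $x,y\in U\cap fW$, then $x^{-1}y$ lies in $U^{-1}U=UU\subseteq U_0$ (by symmetry of $U$) and simultaneously in $W^{-1}f^{-1}fW=WW\subseteq V_0$ (by symmetry of $W$), so $x^{-1}y\in U_0\cap V_0=\{e\}$ and $x=y$. Covering $U=\bigcup_{f\in F}(U\cap fW)$ then yields $|U|\le|F|\le\kappa$, which is the required bound on $\triangle(G,\tau_1)$.

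I do not expect a serious obstacle here: the argument rests entirely on fusing the ``discreteness witness'' $(U_0,V_0)$ coming from transversality with the covering $G=FW$ coming from narrowness. The only point demanding care is the choice of the symmetric refinements $U$ and $W$, arranged so that the difference $x^{-1}y$ of any two points in a single piece $U\cap fW$ is simultaneously controlled inside both $U_0$ and $V_0$; once that is set up correctly, transversality forces $x=y$ and the counting step is immediate.
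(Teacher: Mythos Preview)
Your argument is correct and is precisely the standard proof of this duality principle. Note, however, that the present paper does not supply its own proof of this statement: it is merely quoted as \cite[Theorem~2.2]{DTY2006} and used as a tool, so there is no in-paper proof to compare against. What you have written is essentially the original argument from \cite{DTY2006}.
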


Further, D. Dikranjan, M. Tkachenko and I. Yaschenko in \cite{DTY2006} also discussed the impact of weak commutativity and weak $\omega$-bounded on transversality. The following three problems were posed in \cite{DTY2006}.

\begin{problem}\cite[Problem 6.1]{DTY2006}\label{p1}
Let $G$ be a topological group admitting a transversal group topology and $K$ be a compact normal subgroup of $G$. Does $G/K$ admit a transversal group topology?
\end{problem}

{\bf Note} It follows from \cite[Example 5.4]{DTY2005} that even if $G/K$ admits a
transversal group topology, for a compact normal subgroup $K$ of
$G$, the group $G$ can fail admitting a transversal group topology.

\begin{problem}\cite[Problem 6.2]{DTY2006}\label{p2}
Characterize the locally compact groups that admit a transversal group topology.
\end{problem}

A Tychonoff space is said to be {\it pseudocompact} if its image under any continuous function to $\mathbb{R}$ is bounded. In \cite{DTY2006}, D. Dikranjan, M. Tkachenko and I. Yaschenko proved that if a locally compact and connected topological group $G$ admits a transversal group topology then either the central subgroup $Z(G)$ is infinite discrete or $Z(G)$ admits a transversal group topology. Then they posed the following problem.

\begin{problem}\cite[Problem 6.3]{DTY2006}\label{p3}
If a locally pseudocompact and connected topological group $G$ admits a transversal group topology, then either is the central subgroup $Z(G)$ infinite discrete or does $Z(G)$ admit a transversal group topology?
\end{problem}

In this paper, we shall give an affirmative answer to Problem~\ref{p3} and some partial answers to Problems~\ref{p1} and~\ref{p2} respectively. The paper is organized as follows.

In Section 2, we introduce the necessary notation and terminology which are
used in the rest of the paper. In Section 3, we investigate the
locally pseudocompact connected topological groups. We prove that each locally pseudocompact, connected topological group satisfies CSP, which gives an affirmative answer to \cite[Problem 6.3]{DTY2006}. Moreover, we discuss the transversality in some classes of locally pseudocompact groups and extend some results in \cite{DTY2006}. In Section 4, we study the transversality of locally precompact groups. In particular, we prove that for a locally precompact connected abelian group $G$ and any compact subgroup $K$ of $G$, if $G$ admits a transversal group topology then $G/K$ also admits a transversal group topology, which gives a partial answer to Problem~\ref{p1}. In Section 5, we mainly discuss central topological groups. First, we give a characterization of central topological groups that admit transversal group topologies. Moreover, we prove that for a connected central topological group $G$ and any compact subgroup $K$ of $G$, if $G$ admits a transversal group topology then $G/K$ also admits a transversal group topology, which also gives a partial answer to Problem~\ref{p1}.

 \maketitle
\section{Notation and terminology}
We denote by $\mathbb{N}$ and $\mathbb{R}$ the set of all positive
  integers and the reals, respectively, by $\mathbb{P}$ the set of primes. For undefined
  notation and terminology, the reader may refer to \cite{AT2008} and
  \cite{E1989}.

For a group $G$, we denote by $e$ the neutral element, by $Z(G)$ the center of $G$, by $C(G)$ the connected component of the neutral element. The {\it socle} of $G$ is defined by $Soc(G)=\bigoplus_{p\in\mathbb{P}}G[p]$, where $G[p]=\{x\in G: px=0\}$ for each $p\in\mathbb{P}$. The group $G$ is called {\it divisible} if for every $g\in G$ and $n\in\mathbb{N}$ the equation $x^{n}=g$ has a solution in $G$. Abelian groups will be written additively. If $G$ is a topological group, then its index of {\it narrowness} $in(G)$ is the minimal infinite cardinal number $\kappa$ such that $G$ can be covered by at most $\kappa$ translates of each neighborhood of the neutral element. Topological group $G$ satisfying $in(G)\leq\omega$ is called {\it $\omega$-narrow}. The topological group $G$ is called {\it precompact} if $G$ can be covered by finitely many translates of each neighborhood of the neutral element. The concept of precompactness admits a natural extension to subsets of topological groups as follows. A subset $B$ of a topological group $G$ is said to be {\it precompact} in $G$, if for each neighborhood $U$ of the identity in $G$, one can find a finite set $F\subset G$ such that
$B\subset FU $ and $B\subset UF$. It is well known that all separable groups and $\sigma$-compact groups are $\omega$-narrow and each pseudocompact topological group is precompact, see \cite{AT2008}.

A topological group $G$ is said to be {\it locally compact} (resp., {\it locally pseudocompact, locally precompact}) if there exists a neighborhood $U$ of the neutral element $e$ such that $U$ is compact (resp., pseudocompact, precompact).

\begin{definition}\cite{DTY2006}
An infinite group $G$ is said to be {\it weakly abelian} if for each countable set $U\subset G$, there exists a finite set $F\subset G$ such that $\bigcap_{x\in F}U^{x}\subset Z(G)$, where $U^{x}$ denotes the set $x^{-1}Ux$.
\end{definition}

The class of topological groups that admit transversal group topologies is denoted by $\mbox{{\bf Trans}}$. Recall that a topological group $G$ satisfies the {\it central subgroup paradigm} \cite{DTY2006}, briefly, $G\in CSP$ if
$$G\in \mbox{{\bf Trans}}\Rightarrow Z(G)\in\mbox{{\bf Trans}}\ \mbox{or}\ Z(G)\ \mbox{is infinite discrete}.$$

For a topological group $G$ and each $g\in G$, let $c_{G}(g)=\{x\in G: xg=gx\}$ be the {\it centralizer} \cite{DTY2006} of $g$ in $G$. Obviously, each $c_{G}(g)$ is a closed subgroup. Consider the following condition on $G$:\\

\maketitle
(NC) For no $g\in G\setminus Z(G)$, the subgroup $c_{G}(g)$ of $G$ is open.

 \maketitle
\section{locally pseudocompact connected topological groups}
In this section, we mainly discuss the transversality in locally pseudocompact connected topological groups, and give an affirmative answer to Problem~\ref{p3}. First, we recall a principal result about topological groups which we need in the
sequel.

\begin{theorem}\cite[Weil]{W1937}\label{th8}
Every locally precompact group $G$ embeds as a dense
topological subgroup to a locally compact group $\widetilde{G}$.
\end{theorem}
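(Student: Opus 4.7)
My plan is to realize $\widetilde{G}$ as the Hausdorff completion of $G$ with respect to its two-sided (Roelcke) uniformity and then verify that this completion inherits a locally compact topological group structure. Equip $G$ with the two-sided uniformity $\mathcal{U}^{*}$, whose basic entourages are
\[
W_{V}=\{(x,y)\in G\times G:x^{-1}y\in V\ \mbox{and}\ yx^{-1}\in V\}
\]
as $V$ ranges over symmetric neighborhoods of $e$. This uniformity induces the original topology of $G$, and with respect to it the inversion map is uniformly continuous. Let $\widetilde{G}$ denote the Hausdorff uniform completion of $(G,\mathcal{U}^{*})$, so that $G$ sits as a dense subspace of the complete Hausdorff uniform space $\widetilde{G}$.

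Next I would extend the group operations to $\widetilde{G}$. Inversion extends by uniform continuity. Multiplication is not globally uniformly continuous on $G$, so I would argue directly with Cauchy filters: if $\mathcal{F}_{1}$ and $\mathcal{F}_{2}$ are Cauchy filters on $G$, I must show that the filter generated by the products $A\cdot B$ with $A\in\mathcal{F}_{1}$, $B\in\mathcal{F}_{2}$ is again Cauchy. Local precompactness is the crucial input here, since every such Cauchy filter eventually lies inside a precompact translate of a neighborhood of $e$, and on a precompact set the conjugation map $x\mapsto a^{-1}xa$ is equicontinuous in $a$; this gives the required smallness estimate $AB(AB)^{-1}=ABB^{-1}A^{-1}\subset V$. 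Defining multiplication on $\widetilde{G}\times\widetilde{G}$ by sending products of Cauchy filters to their limits, and invoking density of $G$, transfers associativity and the identity/inverse axioms from $G$ to $\widetilde{G}$.

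Finally, for local compactness, pick a symmetric precompact neighborhood $U$ of $e$ in $G$ and consider its closure $\overline{U}$ in $\widetilde{G}$. By density of $G$ in $\widetilde{G}$, $\overline{U}$ is still a neighborhood of $e$ in $\widetilde{G}$. Total boundedness of $U$ with respect to $\mathcal{U}^{*}$ passes to $\overline{U}$, and $\overline{U}$, being closed in the complete Hausdorff uniform space $\widetilde{G}$, is itself complete; a complete, totally bounded, Hausdorff uniform space is compact, so $\overline{U}$ is a compact neighborhood of $e$ and $\widetilde{G}$ is locally compact. The main obstacle is precisely the extension of multiplication, because the product of arbitrary Cauchy filters need not be Cauchy in a general topological group; the local precompactness hypothesis is exactly what allows us to confine the argument to precompact pieces where conjugation is uniformly controlled and the product construction goes through.
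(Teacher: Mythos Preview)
The paper does not give a proof of this statement at all; it is simply quoted as Weil's classical theorem with a citation to \cite{W1937}, and is then used as a black box in the arguments that follow. So there is no ``paper's own proof'' to compare against.

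Your sketch is a reasonable outline of the standard construction, but two points deserve comment. First, a terminological slip: the uniformity you describe, with entourages $W_{V}=\{(x,y):x^{-1}y\in V\text{ and }yx^{-1}\in V\}$, is the \emph{upper} (two-sided) uniformity, i.e.\ the join of the left and right uniformities. The Roelcke uniformity is the \emph{meet}, with entourages $\{(x,y):y\in VxV\}$; these are different in general, and it is the upper uniformity whose completion (the Ra\u{\i}kov completion) you want here.

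Second, you overstate the role of local precompactness in extending multiplication. By Ra\u{\i}kov's theorem the completion of \emph{any} Hausdorff topological group with respect to the two-sided uniformity is again a topological group; no boundedness hypothesis is needed for that step. Local precompactness enters only at the end, to guarantee that the closure $\overline{U}$ of a precompact neighborhood is compact (totally bounded plus complete). Your equicontinuity-of-conjugation argument on precompact sets is a valid shortcut, but describing precompactness as ``the crucial input'' for the extension of the product is misleading, and the displayed expression $AB(AB)^{-1}=ABB^{-1}A^{-1}\subset V$ is not literally what you need---you should be controlling $(a_{1}b_{1})(a_{2}b_{2})^{-1}=a_{1}(b_{1}b_{2}^{-1})a_{1}^{-1}\cdot(a_{1}a_{2}^{-1})$ for $a_{i}\in A$, $b_{i}\in B$. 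With those corrections the argument goes through.
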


The following theorem gives an affirmative answer to Problem~\ref{p3}.

\begin{theorem}\label{t0}
Each locally pseudocompact, connected topological group satisfies CSP.
\end{theorem}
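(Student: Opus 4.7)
The plan is to reduce to the locally compact connected case by passing to the Weil completion. Assume $G$ is locally pseudocompact, connected, and $G\in\mathbf{Trans}$; I must deduce that either $Z(G)$ is infinite discrete or $Z(G)\in\mathbf{Trans}$.

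Since every pseudocompact group is precompact, $G$ is locally precompact, so by Theorem~\ref{th8} it embeds as a dense subgroup of a locally compact group $\widetilde{G}$. As the closure of a connected subset is connected, $\widetilde{G}$ is also connected. Moreover, the hypothesis $G\in\mathbf{Trans}$ precludes $G$ from being precompact (by the result of \cite{DTY2005} cited in the introduction), so $\widetilde{G}$ is a non-compact, locally compact, connected group. The key intermediate step is then to promote $G\in\mathbf{Trans}$ to $\widetilde{G}\in\mathbf{Trans}$: I would attempt this either by extending a transversal partner of the topology of $G$ across the dense inclusion $G\hookrightarrow\widetilde{G}$, or by an independent construction exploiting the non-compactness and local compactness of $\widetilde{G}$.

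Once $\widetilde{G}\in\mathbf{Trans}$ is in hand, the locally compact connected case of CSP proved by Dikranjan, Tkachenko and Yaschenko (quoted in the paragraph just before Problem~\ref{p3}) yields the dichotomy: either $Z(\widetilde{G})$ is infinite discrete or $Z(\widetilde{G})\in\mathbf{Trans}$. To pass this down to $Z(G)$, density of $G$ in $\widetilde{G}$ together with continuity of commutation gives $Z(G)=Z(\widetilde{G})\cap G$, and I would then show that $Z(G)$ is dense in $Z(\widetilde{G})$, using the connectedness of $G$ in an essential way. In the first case, density plus discreteness of $Z(\widetilde{G})$ force $Z(G)=Z(\widetilde{G})$, hence $Z(G)$ is infinite discrete; in the second case, the restriction of a transversal partner on $Z(\widetilde{G})$ to its dense subgroup $Z(G)$ supplies a transversal group topology on $Z(G)$.

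The main obstacle is twofold: lifting $\mathbf{Trans}$-membership from $G$ to $\widetilde{G}$, and establishing the density of $Z(G)$ in $Z(\widetilde{G})$. Both steps rely on the connectedness hypothesis, which is what ultimately distinguishes this argument from the general locally pseudocompact setting.
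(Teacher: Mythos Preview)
Your outline leaves precisely the two steps you flag as obstacles genuinely unresolved, and neither is routine. Promoting $G\in\mathbf{Trans}$ to $\widetilde{G}\in\mathbf{Trans}$ is not clear: a transversal partner for $\tau|_G$ lives only on $G$, and there is no general extension procedure to the completion; nor is it known that every non-compact connected locally compact group lies in $\mathbf{Trans}$ (the paper proves this only for \emph{central} groups, in Theorem~\ref{th2}). The second transfer is equally problematic: from $Z(G)=Z(\widetilde{G})\cap G$ one does not get density of $Z(G)$ in $Z(\widetilde{G})$ without further input, and even granting density, restricting a transversal partner from $Z(\widetilde{G})$ to $Z(G)$ need not yield a \emph{non-discrete} topology on $Z(G)$. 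So as written the argument is a plan with two substantive holes rather than a proof.

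The paper sidesteps both difficulties by never leaving $G$. The completion $\widetilde{G}$ is invoked only to observe that it is connected and locally compact, hence $\sigma$-compact, hence $\omega$-narrow; this narrowness then descends to the dense subgroup $G$ by \cite[Proposition 5.1.1]{AT2008}. Separately, connectedness of $G$ gives condition (NC) (no centralizer of a non-central element is open), and local pseudocompactness makes $G$ Baire; by \cite[Lemma~2.12]{DTY2006} a Baire group with (NC) is weakly abelian. One now applies \cite[Corollary~2.17]{DTY2006}, which states directly that every weakly abelian $\omega$-narrow group satisfies CSP. No comparison of $Z(G)$ with $Z(\widetilde{G})$, and no lifting of transversability, is ever needed.
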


\begin{proof}
Let $G$ be a locally pseudocompact, connected topological group. Then, from Theorem~\ref{th8}, it follows that $G$ can embed as a dense topological subgroup of a locally compact group $\widetilde{G}$. Hence $\widetilde{G}$ must be connected by the connectedness of $G$ and its denseness in $\widetilde{G}$. Then $\widetilde{G}$ is a locally compact, connected topological group, hence is $\sigma$-compact and thus $\omega$-narrow. Therefore, it follows from \cite[Proposition 5.1.1]{AT2008} that $G$ is also $\omega$-narrow. By the connectedness of $G$, we see that $G$ also satisfies (NC). Since $G$ is a locally pseudocompact, $G$ is locally Baire and, hence, it is Baire. From \cite[Lemma 2.12]{DTY2006}, we know that a Baire topological group satisfying (NC) is weakly abelian. Then $G$ is a weak abelian, $\omega$-narrow topological group, hence it follows from \cite[Corollary 2.17]{DTY2006} that $G$ satisfies CSP.
\end{proof}

The following corollary can be easily obtained from Theorem~\ref{t0} and \cite[Theorem 2.2]{DTY2006}.

\begin{corollary}\label{c1}
A locally pseudocompact, connected topological group with precompact center does not admit a transversal group topology.
\end{corollary}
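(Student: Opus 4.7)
The plan is to argue by contradiction: assume $G$ admits a transversal group topology, so that $G \in \mathbf{Trans}$, and derive a contradiction with the precompactness of $Z(G)$. The two ingredients are Theorem~\ref{t0}, which asserts $G$ satisfies CSP, and the already–quoted principle from \cite{DTY2005} that no precompact Hausdorff topological group admits a transversal group topology (a fact also extractable from the duality Theorem~\cite[Theorem 2.2]{DTY2006} through the inequality $\triangle(H,\sigma) \le in(H,\tau)$ applied when $\tau$ is precompact, forcing $in(H,\tau) \le \omega$).

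Concretely, the CSP conclusion forces the dichotomy that either $Z(G) \in \mathbf{Trans}$, or $Z(G)$ is infinite discrete, and I would dispose of each branch in turn. For the infinite discrete branch, observe that a discrete topological group is precompact only when it is finite, since the identity-neighbourhood $\{e\}$ can be covered by finitely many of its own translates only for finite groups; this contradicts the hypothesis of infinite discreteness. For the $Z(G)\in\mathbf{Trans}$ branch, note that precompactness of $Z(G)$ as a subset of $G$ transfers to its subspace topology (a standard verification using a symmetric neighbourhood $W$ with $WW^{-1}\subset V$ converts a covering by translates from $G$ into one by translates from $Z(G)$), so the cited principle directly rules out $Z(G)\in\mathbf{Trans}$.

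Both branches yield a contradiction, so $G \notin \mathbf{Trans}$. The argument is essentially bookkeeping once Theorem~\ref{t0} is in hand, and the only nontrivial input—the precompact-no-transversal principle—has already been flagged in the introduction. The only conceptual point to watch is the transfer of precompactness from $Z(G)\subset G$ to $Z(G)$ as a standalone topological group, but this is routine for subgroups; accordingly I do not anticipate any serious obstacle.
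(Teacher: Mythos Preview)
Your argument is correct and is exactly the intended unpacking of the paper's one-line proof (which cites Theorem~\ref{t0} together with \cite[Theorem~2.2]{DTY2006}): invoke CSP, then eliminate both branches of the dichotomy using precompactness of $Z(G)$. One small caveat on the parenthetical: with $in(G)$ defined here as the minimal \emph{infinite} cardinal, precompactness only gives $in(Z(G),\tau)=\omega$, and hence $\triangle(Z(G),\sigma)\le\omega$, which is not yet a contradiction---so the precompact-no-transversal principle is not literally extractable from the duality inequality in this form; but your main line correctly appeals to the result from \cite{DTY2005} directly, so the proof stands.
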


A Hausdorff topological group $(G, \tau)$ is {\it minimal} if $\tau$ is a minimal element of the partially ordered set of Hausdorff group topologies on the group $G$  \cite{S1971}; we say that $G$ is {\it transversable} if $G$ admits a transversal group topology.

\begin{corollary}\label{c2}
Each locally pseudocompact, connected minimal topological group $G$ is not transversable.
\end{corollary}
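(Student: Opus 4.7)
The plan is to reduce this to Corollary~\ref{c1}, whose hypotheses are ``locally pseudocompact, connected, and with precompact center.'' The first two conditions are already in hand, so the whole issue is to verify that the center $Z(G)$ of a minimal topological group is precompact.

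The key external fact I would invoke is the classical theorem of Prodanov (also appearing in Stephenson's work and reproduced in the monograph \cite{AT2008}): in every minimal Hausdorff topological group the center is precompact. Given this, $G$ becomes a locally pseudocompact, connected topological group whose center $Z(G)$ is precompact, and a direct application of Corollary~\ref{c1} shows that $G$ admits no transversal group topology, i.e.\ $G$ is not transversable.

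As a sanity check, one can also reach the conclusion through Theorem~\ref{t0}: since $G$ is locally pseudocompact and connected, it satisfies CSP, so if $G$ were transversable then either $Z(G)$ would be transversable or $Z(G)$ would be infinite discrete. The precompactness of $Z(G)$ (via Prodanov's theorem) rules out the second case, because an infinite discrete group cannot be precompact; and the first case is excluded by the result of \cite{DTY2005} that no precompact group admits a transversal group topology.

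The main obstacle, as far as there is one, is locating the precompactness-of-the-center statement in the right form, since Prodanov's original theorem is usually phrased for minimal abelian groups, and one has to appeal to the non-abelian refinement (minimal $\Rightarrow$ $Z(G)$ precompact). Once this citation is pinned down the rest is immediate from Corollary~\ref{c1}.
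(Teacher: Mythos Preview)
Your proposal is correct and follows essentially the same route as the paper: establish that $Z(G)$ is precompact and then invoke Corollary~\ref{c1}. The only refinement is that the paper makes the ``non-abelian refinement'' you flag explicit as a two-step citation: first $Z(G)$ is itself minimal by \cite[Theorem~2.2]{D1998}, and then, being abelian and minimal, $Z(G)$ is precompact by Prodanov--Stoyanov \cite{PS1984}.
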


\begin{proof}
By \cite[Theorem 2.2]{D1998}, $Z(G)$ is minimal. Hence $Z(G)$ is precompact by \cite{PS1984}, then $Z(G)$ is not transversable by \cite[Theorem 3.13]{DTY2005}. Now Corollary~\ref{c1} applies.
\end{proof}

We conjecture that the answer to the following question is affirmative.

\begin{question}\label{q1}
Let $G$ be a connected locally pseudocompact group. If $G$ is not transversable, is $Z(G)$ precompact?
\end{question}

The following theorem gives a partial answer to Question~\ref{q1}.

\begin{theorem}\label{th4}
Let $G$ be a connected locally pseudocompact group and $Z(G)$ be divisible or finitely generated. Then $G$ is not transversable if and only if $Z(G)$ is precompact. If, an addition, $G$ is locally compact, then $G$ is not transversable if and only if $Z(G)$ is compact.
\end{theorem}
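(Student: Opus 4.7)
The plan is to derive the ``if'' direction at once from Corollary~\ref{c1} (which does not require divisibility or finite generation of $Z(G)$) and to prove the ``only if'' direction contrapositively: assuming $Z(G)$ is not precompact, I construct a non-discrete Hausdorff group topology $\delta^{G}$ on $G$ transversal to the original topology $\tau$.

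Since $Z(G)$ is closed in the locally precompact group $G$, it is itself locally precompact, and I analyse the inherited topology $\tau|_{Z(G)}$ under the hypothesis on $Z(G)$. If $Z(G)$ is finitely generated, it is standard that every non-discrete Hausdorff group topology on a finitely generated abelian group is precompact, so the hypothesis ``not precompact'' forces $\tau|_{Z(G)}$ to be discrete, whence $Z(G)$ is infinite discrete. If $Z(G)$ is divisible, then using the structure of locally precompact divisible abelian groups together with the abelian transversability results of~\cite{DTY2005}, one shows that either $\tau|_{Z(G)}$ is discrete (and $Z(G)$ is again infinite discrete) or $(Z(G),\tau|_{Z(G)})$ admits a transversal partner on $Z(G)$.

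In either subcase I select a non-discrete Hausdorff group topology $\delta$ on $Z(G)$: when $\tau|_{Z(G)}$ is discrete, any non-discrete Hausdorff group topology works, and such a topology exists on every infinite abelian group (for instance the Bohr topology); when $\tau|_{Z(G)}$ is non-discrete, I choose $\delta$ transversal to $\tau|_{Z(G)}$ on $Z(G)$. Centrality of $Z(G)$ in $G$ renders the family of $\delta$-neighbourhoods of $e$ in $Z(G)$ invariant under conjugation by $G$, so it serves as a neighbourhood base at $e$ for a Hausdorff non-discrete group topology $\delta^{G}$ on $G$ in which every coset of $Z(G)$ is clopen and the restriction to $Z(G)$ is exactly $\delta$. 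A basic $(\tau\vee\delta^{G})$-neighbourhood of $e$ has the form $U\cap V$ with $U$ a $\tau$-neighbourhood of $e$ in $G$ and $V\subset Z(G)$ a $\delta$-neighbourhood of $e$, so $U\cap V=(U\cap Z(G))\cap V$; the choice of $\delta$ (or, when $\tau|_{Z(G)}$ is discrete, the existence of a $\tau$-neighbourhood $U$ of $e$ in $G$ with $U\cap Z(G)=\{e\}$) allows us to shrink $U$ and $V$ so that $U\cap V=\{e\}$, making $\tau\vee\delta^{G}$ discrete and $\delta^{G}$ the desired transversal partner.

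The chief obstacle I anticipate lies in the divisible branch of the analysis of $\tau|_{Z(G)}$: one must unravel the structure of locally precompact divisible abelian groups---isolating the $\mathbb{R}$-summand, any $\mathbb{Q}$- or $\mathbb{Z}(p^{\infty})$-component, and the torsion part---and pair each piece against the transversability criteria available in~\cite{DTY2005}. The final clause of the theorem, concerning locally compact $G$, follows at once, since a closed precompact subset of a locally compact group is compact, so precompactness and compactness of $Z(G)$ coincide in that setting.
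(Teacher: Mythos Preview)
Your overall plan coincides with the paper's: the ``if'' direction is exactly Corollary~\ref{c1}, and for the ``only if'' direction both you and the paper argue contrapositively that a non-precompact $Z(G)$ forces $G\in\mbox{{\bf Trans}}$ by first showing $Z(G)$ is either infinite discrete or itself transversable, and then lifting to $G$. The lifting step you sketch---taking the $\delta$-neighborhoods of $e$ in $Z(G)$ as a conjugation-invariant base at $e$ for a topology $\delta^{G}$ on $G$---is precisely the content of \cite[Corollary~3.9]{DTY2005}, which the paper simply cites.

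The genuine gap is in your finitely generated branch. The assertion that ``every non-discrete Hausdorff group topology on a finitely generated abelian group is precompact'' is false, even under the extra hypothesis of local precompactness that you have available. Take $\mathbb{Z}^{2}$ with the product of the discrete topology on the first factor and the $2$-adic topology on the second: this is Hausdorff, non-discrete, and locally precompact (each basic neighborhood $\{0\}\times 2^{n}\mathbb{Z}$ is precompact), yet not precompact, since no finite family of translates of $\{0\}\times\mathbb{Z}$ covers $\mathbb{Z}^{2}$. So you cannot conclude that $\tau|_{Z(G)}$ is discrete in the finitely generated case. The paper avoids this by treating the non-discrete divisible and finitely generated cases in one stroke via \cite[Lemma~4.8 and Theorem~4.6]{DTY2005}, which give directly that a non-precompact, non-discrete abelian group satisfying either hypothesis is transversable; \cite[Corollary~3.9]{DTY2005} then finishes. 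Your divisible branch was already pointed in this direction; for the finitely generated branch you need the analogous transversability result rather than the false dichotomy.
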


\begin{proof}
Sufficiency. Assume $Z(G)$ is precompact. By Theorem~\ref{t0}, $G$ is not transversable by \cite[Theorem 3.13]{DTY2005}.

Necessity. In this part of our argument, we do not use the assumption
that $G$ is connected. Suppose that $G$ is not transversable. We prove that $Z(G)$ is precompact. Suppose not, if $Z(G)$ is infinite discrete, then $G$ is transversable by \cite[Corollary 3.9]{DTY2005}, which is a contradiction. Hence we can assume that $Z(G)$ is non-discrete. Since $Z(G)$ is divisible or finitely generated, it follows from  \cite[Lemma 4.8 and Theorem 4.6]{DTY2005} that $Z(G)$ is transversable. Applying \cite[Corollary 3.9]{DTY2005} again, we see that $G$ is transversable, which is a contradiction. Therefore, $Z(G)$ is precompact.
\end{proof}

By the necessity part of the proof of Theorem~\ref{th4}, we have the following corollary.

\begin{corollary}\label{c3}
Let $G$ be a locally pseudocompact group such that $Z(G)$ is divisible or finitely generated. If $Z(G)$ is non-precompact, then $G$ is transversable.
\end{corollary}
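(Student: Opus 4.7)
The plan is to argue by contrapositive, thus recovering exactly the ``necessity'' direction already established in Theorem~\ref{th4}. The authors explicitly note in that proof that connectedness of $G$ is not used in that direction, so Corollary~\ref{c3} falls out as a standalone consequence. Concretely, I would assume $G$ is not transversable and derive that $Z(G)$ is precompact; contrapositively this yields the statement ``$Z(G)$ non-precompact $\Rightarrow G$ transversable.''

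The argument splits on whether $Z(G)$ is discrete. First I would dispose of the discrete case: if $Z(G)$ is infinite and discrete, then the lifting principle \cite[Corollary 3.9]{DTY2005}, which produces a transversal topology on $G$ from one on $Z(G)$, forces $G$ to be transversable, contradicting the hypothesis. So we may suppose $Z(G)$ is non-discrete. At this point the structural assumption on $Z(G)$ enters: since $Z(G)$ is either divisible or finitely generated, \cite[Lemma 4.8 and Theorem 4.6]{DTY2005} apply to give that $Z(G)$ itself admits a transversal group topology. Applying \cite[Corollary 3.9]{DTY2005} once more then lifts transversability from $Z(G)$ up to $G$, yielding the same contradiction. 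Hence $Z(G)$ must be precompact.

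There is essentially no obstacle here beyond invoking the correct references; the whole corollary is a repackaging of the necessity half of Theorem~\ref{th4}, with the connectedness hypothesis dropped (since it was never used) and the conclusion read contrapositively. The only mildly delicate point is remembering that the dichotomy ``discrete vs.\ non-discrete'' must be handled separately, because the divisibility/finite generation lemma applies only after one has ruled out the infinite discrete case (where Corollary 3.9 of \cite{DTY2005} alone already suffices). Once the two cases are handled in sequence, the proof is complete in a single short paragraph.
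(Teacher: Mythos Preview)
Your proposal is correct and follows exactly the paper's own approach: the corollary is stated immediately after Theorem~\ref{th4} with the one-line justification ``By the necessity part of the proof of Theorem~\ref{th4},'' and your contrapositive argument reproduces that necessity proof verbatim, including the discrete/non-discrete split and the same citations to \cite[Corollary 3.9, Lemma 4.8, Theorem 4.6]{DTY2005}. There is nothing to add or correct.
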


By Corollary~\ref{c3}, we also have the following result.

\begin{corollary}
Let $G$ be a locally pseudocompact, non-pseudocompact, divisible or finitely generated abelian group. Then $G$ is transversable.
\end{corollary}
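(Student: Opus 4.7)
The plan is to derive this as a direct consequence of Corollary~\ref{c3}, since $G$ abelian means $Z(G)=G$, and thus the hypothesis of Corollary~\ref{c3} reduces to: $G$ is divisible or finitely generated (given) and $G$ is non-precompact (to be verified). So the whole task is to extract non-precompactness from the stated hypotheses.

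The key small lemma I would establish first is the following folklore-type fact: \emph{any locally pseudocompact precompact topological group is pseudocompact}. The argument is a one-line application of the definitions. If $U$ is a pseudocompact neighborhood of the identity and $G$ is precompact, then there exists a finite set $F\subset G$ with $G=FU=\bigcup_{x\in F}xU$. Each translate $xU$ is homeomorphic to $U$ and hence pseudocompact, and a finite union of pseudocompact subspaces of a Tychonoff space is pseudocompact (any continuous real-valued function is bounded on each summand, so bounded on the union). Therefore $G$ is pseudocompact.

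Taking the contrapositive and applying it to our $G$, which is locally pseudocompact but not pseudocompact, we conclude that $G$ is not precompact. Since $G$ is abelian, $Z(G)=G$ is divisible or finitely generated and non-precompact, so Corollary~\ref{c3} (with $Z(G)=G$) immediately gives that $G$ is transversable.

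The main ``obstacle'' is really just the little observation in the second paragraph; everything else is unpacking definitions and quoting Corollary~\ref{c3}. No delicate estimate is needed and no appeal to deeper structure of divisible or finitely generated abelian groups is required beyond what is already packaged into Corollary~\ref{c3}.
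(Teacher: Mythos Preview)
Your argument is correct and follows exactly the route the paper intends: the paper records this corollary with no proof beyond the phrase ``By Corollary~\ref{c3}, we also have the following result,'' so the only content is the reduction to Corollary~\ref{c3} via $Z(G)=G$ together with the (implicit) fact that a locally pseudocompact precompact group is pseudocompact. You have simply made that implicit step explicit, and your verification of it is sound.
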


The following theorem is a generalization of \cite[Corollary 3.1]{DTY2006}. First, we need a lemma.

\begin{lemma}\cite[Proposition 1.3]{EKD1980}\label{llllll}
Each pseudocompact space $X$ without isolated points has cardinality at least $\mathfrak{c}$.
\end{lemma}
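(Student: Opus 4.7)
The plan is to build a Cantor tree of open subsets of $X$ indexed by finite binary strings, and then use pseudocompactness to extract one point out of each of the continuum many branches.

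First I would recursively construct a family $\{U_{s} : s \in 2^{<\omega}\}$ of non-empty open subsets of $X$ such that, writing $s0$ and $s1$ for the two one-step extensions of $s$, one has $\overline{U_{s0}} \cup \overline{U_{s1}} \subseteq U_{s}$ together with $\overline{U_{s0}} \cap \overline{U_{s1}} = \emptyset$. The recursion uses only that $X$ is Tychonoff (hence regular) and has no isolated points: any non-empty open $U_{s}$ contains two distinct points $x \neq y$, which by regularity can be separated by open neighborhoods whose closures are disjoint and both sit inside $U_{s}$.

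Next, for every branch $\sigma \in 2^{\omega}$ I would look at the decreasing sequence of non-empty open sets $U_{\sigma\uhr 1} \supseteq U_{\sigma\uhr 2} \supseteq \cdots$. This is where pseudocompactness actually enters, via the classical equivalent formulation (valid under the Tychonoff hypothesis): a space is pseudocompact if and only if every decreasing sequence of non-empty open sets has non-empty intersection of closures. It follows that I can choose a point $x_{\sigma} \in \bigcap_{n<\omega} \overline{U_{\sigma\uhr n}}$ for every $\sigma$.

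Finally, if $\sigma \neq \tau$ and $n$ is the first coordinate at which they differ, then $x_{\sigma} \in \overline{U_{\sigma\uhr (n+1)}}$ while $x_{\tau} \in \overline{U_{\tau\uhr (n+1)}}$, and those two closures are disjoint by construction, so $x_\sigma \neq x_\tau$. Thus $\sigma \mapsto x_{\sigma}$ injects $2^{\omega}$ into $X$, giving $|X| \geq \mathfrak{c}$. The only real obstacle, and the one place where the argument differs from the standard compact-space proof, is the middle step: without compactness a decreasing chain of non-empty closed sets may well have empty intersection, so everything hinges on the equivalence that turns the "every continuous real-valued function is bounded" definition of pseudocompactness into the open-set filter condition invoked above; this equivalence in turn relies on $X$ being Tychonoff.
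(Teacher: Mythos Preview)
Your argument is correct. Note, however, that the paper does not supply its own proof of this lemma at all: it is simply quoted from van Douwen \cite[Proposition 1.3]{EKD1980} and used as a black box in the subsequent theorem. Your Cantor-tree construction together with the Tychonoff characterization of pseudocompactness (every decreasing sequence of non-empty open sets has non-empty intersection of closures) is the standard proof, and is essentially the argument van Douwen gives.
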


\begin{theorem}
If $(G, \tau)$ is a locally pseudocompact group admitting a locally pseudocompact transversal group topology, then $in(G)\geq\mathfrak{c}$ and, consequently, $|G/C(G)|\geq\mathfrak{c}$.
\end{theorem}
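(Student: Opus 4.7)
The plan is to deduce $in(G)\geq\mathfrak{c}$ from the duality principle \cite[Theorem~2.2]{DTY2006} combined with Lemma~\ref{llllll}, and then conclude $|G/C(G)|\geq\mathfrak{c}$ by showing that $C(G)$ is $\omega$-narrow and applying a routine covering bound.

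\textbf{Step 1: $\triangle(G,\delta)\geq\mathfrak{c}$.} Let $\delta$ be the locally pseudocompact transversal topology on $G$. Using local pseudocompactness of $(G,\delta)$ together with the regularity of topological groups, I would find an open neighborhood $V$ of $e$ whose $\delta$-closure $F=\overline{V}^{\delta}$ is $\delta$-pseudocompact (this is the only mildly delicate technical point, since closed subsets of pseudocompact spaces need not be pseudocompact in general). Because $(G,\delta)$ is non-discrete and homogeneous, no point of $G$ is $\delta$-isolated; I would then verify that $F$ has no points isolated in itself. Indeed, if some $x\in F$ were isolated in $F$, then $O\cap F=\{x\}$ for a $\delta$-open $O$; but $x\in\overline V$ forces $O\cap V\ne\emptyset$, so $O\cap V=\{x\}$ and $\{x\}$ becomes a $\delta$-open singleton, contradicting non-discreteness. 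Lemma~\ref{llllll} therefore gives $|F|\geq\mathfrak{c}$, and since every non-empty $\delta$-open set contains, after translation, such an $F$, one obtains $\triangle(G,\delta)\geq\mathfrak{c}$.

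\textbf{Step 2: $in(G)\geq\mathfrak{c}$.} Apply the duality principle \cite[Theorem~2.2]{DTY2006} to the transversal pair $(\tau,\delta)$: this yields $\triangle(G,\delta)\leq in(G,\tau)$, and hence $in(G)\geq\mathfrak{c}$ at once.

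\textbf{Step 3: $|G/C(G)|\geq\mathfrak{c}$.} The subgroup $C(G)$ is closed in the locally precompact group $(G,\tau)$, hence itself locally precompact. By Weil's Theorem~\ref{th8}, $C(G)$ embeds densely in a locally compact group $\widetilde{C(G)}$; density combined with the connectedness of $C(G)$ forces $\widetilde{C(G)}$ to be connected, hence $\sigma$-compact and $\omega$-narrow. By \cite[Proposition~5.1.1]{AT2008}, $C(G)$ inherits $\omega$-narrowness. A standard covering argument---cover $C(G)$ by $in(C(G))$ translates of a small symmetric neighborhood of $e$, then translate by coset representatives of $C(G)$ in $G$---gives the cardinal inequality $in(G)\leq |G/C(G)|\cdot in(C(G))$. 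Combined with Step~2 and $in(C(G))\leq\omega$, this forces $|G/C(G)|\geq\mathfrak{c}$.

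The main obstacle is Step~1: producing a closed $\delta$-pseudocompact neighborhood of $e$ so that Lemma~\ref{llllll} can be applied. In a general Tychonoff space closed subsets of pseudocompact sets need not be pseudocompact, so one must exploit regularity of $G$ together with the Comfort--Ross style closure properties of pseudocompact sets in topological groups in order to shrink safely inside the given pseudocompact neighborhood. Once this hurdle is cleared, the rest of the argument is bookkeeping with the duality principle and the standard structure theory of connected locally (pseudo)compact groups.
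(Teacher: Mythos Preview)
Your proposal is correct and follows essentially the same route as the paper: establish $\triangle(G,\delta)\ge\mathfrak c$ via Lemma~\ref{llllll}, invoke the duality principle \cite[Theorem~2.2]{DTY2006} for $in(G,\tau)\ge\mathfrak c$, and then pass to $|G/C(G)|$ by showing $in(C(G))\le\omega$ through the Weil completion. The only differences are cosmetic---the paper embeds all of $G$ in $\widetilde G$ and works inside $C(\widetilde G)$ rather than completing $C(G)$ directly, and it quotes \cite[Assertion~2.1]{DTY2006} for the product formula $in(G)=in(C(G))\cdot in(G/C(G))$ instead of your covering bound---and you are in fact more scrupulous than the paper in flagging the one genuine subtlety (producing an open $V$ with $\overline V$ pseudocompact), which the paper simply asserts; the fix you indicate (regular closed subsets of pseudocompact spaces are pseudocompact, combined with regularity of $G$) is exactly what is needed.
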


\begin{proof}
Assume that $\sigma$ is a locally pseudocompact group topology on $G$ transversal to $\tau$. By definition, $\sigma$ is non-discrete. We claim that each non-empty open subset of $\sigma$ has size at least $\mathfrak{c}$ by Lemma~\ref{llllll}. Indeed, if $U$ is a nonempty open set in $(G, \sigma)$, we choose an arbitrary $x\in U$ and an open neighborhood $V$ of the identity $e$ in $G$ such that $\overline{V}$ is pseudocompact and $x\overline{V}\subset U$. Then $|U|\geq |x\overline{V}|=|\overline{V}|\geq\mathfrak{c}$ because the set $\overline{V}$ is pseudocompact and has no isolated points. Therefore, $in(G)\geq\mathfrak{c}$ by \cite[Theorem 2.2]{DTY2006}.

Let us prove that $|G/C(G)|\geq\mathfrak{c}$. Since $in(G)=in(C(G))\cdot in(G/C(G))$ by \cite[Assertion 2.1]{DTY2006}, it suffices to prove $in(C(G))\leq\omega$. Since the group $(G, \tau)$ is locally pseudocompact, it follows from Theorem~\ref{th8} that $(G, \tau)$ can embed as a dense topological subgroup to a locally compact group $\widetilde{G}$. The connectedness of $C(G)$ implies that $\overline{C(G)}^{\widetilde{G}}$ is a connected subgroup of $\widetilde{G}$, hence $C(G)\subset\overline{C(G)}^{\widetilde{G}}\subset C(\widetilde{G})$. Since $C(\widetilde{G})$ is closed in $\widetilde{G}$, it follows that $C(\widetilde{G})$ is a locally compact connected subgroup. Hence $C(\widetilde{G})$ is $\sigma$-compact, which in turn implies that $in(C(\widetilde{G}))\leq\omega$. Then $in(C(G))\leq\omega$ by \cite[Proposition 5.1.1]{AT2008}.
\end{proof}

\begin{corollary}\cite[Corollary 3.1]{DTY2006}
If $(G, \tau)$ is a locally compact group admitting a Locally compact transversal group topology, then $in(G)\geq\mathfrak{c}$ and, consequently, $|G/C(G)|\geq\mathfrak{c}$.
\end{corollary}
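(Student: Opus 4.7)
The plan is to observe that this corollary is an immediate specialization of the preceding theorem, obtained by replacing ``pseudocompact'' with ``compact'' in the hypothesis. The only fact to verify is the elementary inclusion that every locally compact topological group is locally pseudocompact: if $U$ is a neighborhood of the neutral element with compact closure, then $\overline{U}$ is a compact Hausdorff space and hence pseudocompact, since every continuous real-valued function on a compact space is bounded.

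Applying this observation to both $\tau$ and the hypothesized locally compact transversal topology $\sigma$ on $G$, I would conclude that $(G,\tau)$ is locally pseudocompact and that $\sigma$ is a locally pseudocompact group topology on $G$ transversal to $\tau$. The hypotheses of the preceding theorem are thus satisfied, and invoking it directly yields both $in(G)\geq\mathfrak{c}$ and $|G/C(G)|\geq\mathfrak{c}$.

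There is no substantive obstacle; the whole argument is two or three lines. If one preferred a self-contained treatment rather than a one-line deduction, the proof of the preceding theorem could be repeated verbatim: use Weil's Theorem~\ref{th8} to embed $(G,\tau)$ densely in a locally compact group $\widetilde{G}$, apply Lemma~\ref{llllll} to show that every nonempty $\sigma$-open set has cardinality at least $\mathfrak{c}$, then appeal to \cite[Theorem 2.2]{DTY2006} for the lower bound on $in(G)$ and to \cite[Assertion 2.1]{DTY2006} together with $\sigma$-compactness of $C(\widetilde{G})$ for the bound on $|G/C(G)|$. Since locally compact is a strictly stronger hypothesis than locally pseudocompact, no new ingredients are needed.
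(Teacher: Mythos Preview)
Your proposal is correct and matches the paper's treatment exactly: the paper gives no separate proof for this corollary, simply listing it as an immediate consequence of the preceding theorem, which is precisely your observation that locally compact implies locally pseudocompact.
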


The inequality $in(G)\geq\mathfrak{c}$ in Theorem~\ref{t0} implies that the
locally pseudocompact group G is neither connected nor separable. Therefore, no locally pseudocompact group that is either connected or separable can have a locally compact transversal group topology.

 \maketitle
\section{locally precompact topological groups}
In this section, we mainly discuss the transversality in locally precompact topological groups, and give some partial answers to Problems~\ref{p1} and ~\ref{p2}. The following theorem proved in \cite{DTY2005} gives a characterization
of locally precompact abelian groups admitting transversal group topologies.

\begin{theorem}\label{th6}\cite[Theorem 5.18]{DTY2005}
Let $G$ be a locally precompact non-discrete abelian group. Then $G$ does not admit a transversal group topology if and only if there exists $n\in\mathbb{N}$ such that $nG+Soc(G)$ is precompact, if and only if there exists $n\in\mathbb{N}$ such that $nG$ and $Soc(G)$ are precompact.
\end{theorem}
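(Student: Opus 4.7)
The plan is to establish the two biconditionals separately: first the equivalence between the two precompactness conditions, then the main equivalence with non-transversability.

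For the equivalence between ``there exists $n\in\mathbb{N}$ with $nG+Soc(G)$ precompact'' and ``there exists $n\in\mathbb{N}$ with both $nG$ and $Soc(G)$ precompact'', the forward direction is immediate: both $nG$ and $Soc(G)$ are subgroups of $nG+Soc(G)$, and subgroups of precompact topological groups are precompact. For the converse, given a neighborhood $V$ of $0$ and finite sets $F_{1},F_{2}\subset G$ with $nG\subset F_{1}+V$ and $Soc(G)\subset F_{2}+V$, the inclusion $nG+Soc(G)\subset(F_{1}+F_{2})+(V+V)$ delivers precompactness of the sum.

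For the forward direction of the main equivalence (precompactness of some $nG+Soc(G)$ obstructs transversality), I would embed $(G,\tau)$ densely in its Weil completion $\widetilde{G}$ via Theorem~\ref{th8}; the closure $H$ of $nG+Soc(G)$ in $\widetilde{G}$ is then compact. The quotient $\widetilde{G}/H$ is locally compact abelian, annihilated by $n$, with trivial socle, hence discrete by the structure theorem for locally compact abelian groups. Any candidate transversal topology $\sigma$ on $G$ would then have to interact nontrivially with this decomposition: on the precompact core $nG+Soc(G)$ it would be forced to be discrete (since precompact groups are not transversable by \cite[Theorem 3.13]{DTY2005}), while on the discrete quotient it would trivially be discrete, forcing $\sigma$ itself to be discrete, a contradiction with the transversality hypothesis.

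For the converse, I would proceed by contrapositive, splitting into two cases based on whether $Soc(G)$ is itself non-precompact, or $Soc(G)$ is precompact but $nG$ is non-precompact for every $n\in\mathbb{N}$. In each case, I would explicitly construct a non-discrete Hausdorff group topology $\sigma$ on $G$ transversal to $\tau$. The construction relies on Pontryagin duality on $\widetilde{G}$: non-precompactness of $Soc(G)$ supplies an infinite independent family of prime-order cyclic subgroups whose annihilators in the dual yield characters of $G$, while non-precompactness of every $nG$ supplies infinite-order elements supporting independent characters. From either source one extracts a suitable collection of characters and uses them to define the coarser topology $\sigma$. The main obstacle is to arrange the construction so that $\sigma$ is simultaneously non-discrete and Hausdorff while $\tau\vee\sigma$ is discrete; this delicate balancing, which makes essential use of the structural hypothesis on $Soc(G)$ or on the descending family $\{nG\}_{n\in\mathbb{N}}$, constitutes the heart of the argument.
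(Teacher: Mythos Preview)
The paper does not prove this theorem: it is quoted verbatim from \cite[Theorem~5.18]{DTY2005} and used as a black box in Section~4. There is therefore no proof in the present paper against which to compare your proposal.

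That said, your sketch has genuine gaps even as an outline. In the forward direction, your claim that $\widetilde{G}/H$ has trivial socle is not justified and is in fact false in general: a bounded abelian group with trivial socle is trivial, so your two assertions (annihilated by $n$, trivial socle) would force $\widetilde{G}=H$ and hence $G$ precompact, which is not assumed. More seriously, the final paragraph of your forward argument conflates two different statements: that $(nG+Soc(G),\tau|)$ admits no transversal group topology (which follows from precompactness) is not the same as saying that the restriction $\sigma|_{nG+Soc(G)}$ of a hypothetical transversal $\sigma$ must be discrete. The actual proof in \cite{DTY2005} hinges on the fact that $nG+Soc(G)$ is always open in the submaximal topology $\mathcal{M}_G$ (their Corollary~4.3, used in this very paper in Proposition~\ref{c4}), combined with the criterion $\tau\nleq\mathcal{M}_G$ for transversability; your sketch does not touch this mechanism.

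For the converse you have written only a plan, not an argument: you announce a construction via characters but do not carry it out, and you explicitly flag the ``delicate balancing'' as unaddressed. This is the substantive content of the theorem, so as it stands the proposal is a statement of intent rather than a proof.
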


By Theorem~\ref{th6}, we have the following two propositions, which also give some partial answers to Problem~\ref{p2}. The infimum $\mathcal{M}_{G}$ of all maximal non-discrete group topologies on $G$ is call {\it the submaximal group topology of $G$}. First, we need a lemma.

\begin{lemma}\label{ll99}
Let $H$ be an abelian topological group and $G$ be a subgroup of $H$. If $G$ is open in $(H, \mathcal{M}_{H})$, then $\mathcal{M}_{G}=\mathcal{M}_{H}|_{G}$. 
\end{lemma}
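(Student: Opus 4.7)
The plan is to establish a bijection between the maximal non-discrete group topologies on $H$ and those on $G$, and then to show that restriction to $G$ commutes with taking the infimum (intersection) of such a family. Write $\mathcal{M}_H = \bigcap_{\tau} \tau$, where $\tau$ ranges over all maximal non-discrete group topologies on $H$, and similarly $\mathcal{M}_G = \bigcap_{\sigma} \sigma$.

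First I note that since $\mathcal{M}_H \subseteq \tau$ for every such $\tau$, and $G$ is open in $(H, \mathcal{M}_H)$ by hypothesis, $G$ is open in $(H, \tau)$. Consequently each restriction $\tau|_G$ is a non-discrete group topology on $G$: if it were discrete, then $\{e\}$ would be open in $(H,\tau)$, forcing $\tau$ to be discrete.

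Next, using the commutativity of $H$, every group topology $\sigma$ on $G$ lifts to a group topology $\tilde\sigma$ on $H$ by declaring a neighborhood base at $e$ in $(H, \tilde\sigma)$ to coincide with a neighborhood base at $e$ in $(G, \sigma)$; the abelianness of $H$ makes this base automatically conjugation-invariant, so $\tilde\sigma$ is a well-defined Hausdorff group topology in which $G$ is open and $\tilde\sigma|_G = \sigma$. I then verify that $\tau \mapsto \tau|_G$ is a bijection from the maximal non-discrete topologies on $H$ onto those on $G$, with inverse $\sigma \mapsto \tilde\sigma$: if $\sigma$ is maximal non-discrete on $G$ and $\tau' \supsetneq \tilde\sigma$ is non-discrete, then $G$ is open in $\tau'$, so $\tau'|_G$ is a non-discrete group topology, and a short argument using translates of neighborhoods of $e$ inside $G$ shows $\tau'|_G \supsetneq \sigma$, contradicting maximality; symmetrically, $\tau|_G$ is maximal non-discrete whenever $\tau$ is.

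Finally, since $G$ is open in $(H, \tau)$ for each $\tau$ in the family, a subset $V \subseteq G$ belongs to $\tau|_G$ if and only if $V \in \tau$. Hence
\[ \mathcal{M}_H|_G = \left(\bigcap_{\tau} \tau\right)\Big|_G = \bigcap_{\tau}(\tau|_G) = \bigcap_{\sigma} \sigma = \mathcal{M}_G, \]
where the second equality uses the openness of $G$ in each $\tau$ and the third uses the bijection just established. The main obstacle I anticipate is the implication $\tau' \supsetneq \tilde\sigma \Rightarrow \tau'|_G \supsetneq \sigma$; this relies on $G$ remaining open in any refinement of $\tilde\sigma$, which is where the abelianness of $H$ plays its essential role.
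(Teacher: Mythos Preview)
Your proof is correct and the core mechanism---lifting a group topology from $G$ to $H$ using a neighbourhood base at $e$ (valid because $H$ is abelian), and exploiting that $G$ is open in every maximal topology refining $\mathcal{M}_H$---is exactly what the paper uses. The difference is one of packaging: the paper obtains the inequality $\mathcal{M}_H|_G \leq \mathcal{M}_G$ by citing \cite[Corollary~3.3]{DTY2005} and then proves only that each maximal $\tau$ on $H$ restricts to a maximal $\tau|_G$ on $G$ (via the same lifting argument you use). You instead establish a full bijection between the two families of maximal topologies and derive both inequalities from it, together with the explicit check that restriction commutes with intersection when $G$ is open in every member of the family. Your route is thus more self-contained but does a bit more work than strictly necessary.

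One small expository point: you locate the essential use of abelianness in the step ``$G$ remains open in any refinement of $\tilde\sigma$'', but that step is automatic (openness is preserved under refinement). Abelianness is used earlier, precisely where you first invoke it: to guarantee that the lifted filter base at $e$ defines a group topology on $H$ without any normality or conjugation-invariance condition. This does not affect the validity of the argument.
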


\begin{proof}
From \cite[Corollary 3.3]{DTY2005}, we have $\mathcal{M}_{H}|_{G}\leq \mathcal{M}_{G}$. In order to see $\mathcal{M}_{G}\leq\mathcal{M}_{H}|_{G}$, it suffices to prove that $\tau|_{G}$ is a maximal group topology on $G$ for any non-discrete maximal group topology $\tau$ on $H$. Let $\tau$ be a non-discrete maximal group topology on $H$. Since $G$ is open in $(H, \mathcal{M}_{H})$, we see that $G\in\tau$, hence $\tau|_{G}$ is not discrete. We claim that $\tau|_{G}$ is a maximal group topology on $G$. Suppose not, there exists a non-discrete group topology $\sigma$ on $G$ which is strict finer than $\tau|_{G}$. Then we can can extend the group topology $\sigma$ from $G$ to $H$. Indeed, declare all $\sigma$-open subsets of $G$ open in $H$ to get a group topology $\eta$ on $H$. Then $\eta|_{G}=\sigma$ and $\eta$ is strict finer than $\tau$ since $G\in\tau$, which is a contradiction with the maximality of $\tau$ on $H$.
\end{proof}

\begin{proposition}\label{c4}
Let $G$ be a uncountable non-discrete locally precompact abelian group such that each precompact subgroup of $G$ has the Suslin property as a subspace in $(G, \mathcal{M}_{G})$. Then $G$ admits a transversal group topology.
\end{proposition}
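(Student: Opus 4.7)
My plan is to argue by contradiction, using Theorem~\ref{th6} as the main algebraic input. Suppose that $G$ admits no transversal group topology. Since $G$ is locally precompact, non-discrete, and abelian, Theorem~\ref{th6} yields some $n\in\N$ such that $nG$ and $Soc(G)$ are both precompact; in particular, $H:=nG+Soc(G)$ is a precompact subgroup of $G$. The hypothesis of the proposition then guarantees that $H$, together with every one of its subgroups (which are automatically precompact as subsets of $H$), carries the Suslin property as a subspace of $(G,\mathcal{M}_G)$.

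The next step is to convert this cellularity restriction into a cardinality bound, and my strategy splits into two parts. In the first part I would show that $|H|\le\aleph_0$: the key idea is that the submaximal topology $\mathcal{M}_G$, being the infimum of all maximal Hausdorff non-discrete group topologies on $G$, is fine enough that any uncountable subgroup $K\subseteq H$ must admit an uncountable family of pairwise disjoint relatively $\mathcal{M}_G$-open subsets, contradicting the Suslin property that the hypothesis forces upon $K$. Extracting such a family is to be done by exploiting the explicit algebraic decomposition $H=nG+Soc(G)$ together with the divisible/socle structure of abelian groups (so as to locate inside $K$ an uncountable ``nearly discrete'' subset in the $\mathcal{M}_G$-topology). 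In the second part I would show that $[G:H]\le\aleph_0$. For this I plan to establish that $H$ is open in $(G,\mathcal{M}_G)$; once this is granted, Lemma~\ref{ll99} identifies $\mathcal{M}_H$ with $\mathcal{M}_G|_H$, the cosets $\{g+H:g\in G\}$ form a pairwise disjoint family of nonempty $\mathcal{M}_G$-open sets, and an argument parallel to part one applied to the quotient $G/H$ gives $[G:H]\le\aleph_0$. Combining the two bounds yields $|G|\le\aleph_0$, contradicting uncountability of $G$.

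The principal obstacle is the openness claim for $H$ in $(G,\mathcal{M}_G)$: one must show that whenever $nG+Soc(G)$ is precompact in $G$, every maximal Hausdorff non-discrete group topology on $G$ must declare this algebraically distinguished subgroup to be open, so that its infimum $\mathcal{M}_G$ does too. This is precisely where the fine structure of the maximal topologies built in \cite{DTY2005} must be invoked, and every subsequent cellularity-to-cardinality step rests on it. A secondary technical point is the passage from the Suslin property on subgroups of $H$ to an actual cardinality bound on $|H|$: this will depend on showing that $\mathcal{M}_G$ has a neighborhood base at the identity rich enough to separate arbitrary uncountable subsets of $H$ into uncountably many relatively open pieces, so that the Suslin property forces countability rather than merely countable cellularity.
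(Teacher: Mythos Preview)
Your overall contradiction strategy matches the paper's, but two of the steps you flag as obstacles are in fact already-cited results, and your Part~2 is both unnecessary and flawed.

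First, the openness of $H=nG+Soc(G)$ in $(G,\mathcal{M}_G)$ is not something you need to prove from scratch: the paper simply invokes \cite[Corollary 4.3]{DTY2005}, which gives exactly this. With $H$ open, Lemma~\ref{ll99} yields $\mathcal{M}_G|_H=\mathcal{M}_H$, so the Suslin hypothesis says that $(H,\mathcal{M}_H)$ has the Suslin property. At this point the paper cites \cite[Corollary 3.12]{APTTW1998}, which states that any abelian group whose submaximal topology has the Suslin property is countable. That single citation replaces the entirety of your Part~1; your proposed ad~hoc construction of uncountable cellular families inside $H$ is essentially an attempt to reprove that result, and you give no concrete mechanism for it.

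Second, your Part~2 should be deleted. The hypothesis grants the Suslin property only to \emph{precompact subgroups} of $G$, not to $G$ itself, so the disjoint family of cosets $\{g+H\}$ tells you nothing about $[G:H]$; the ``parallel argument on $G/H$'' has no hypothesis to feed on. Fortunately you do not need this bound at all: elementary abelian group theory gives $|H|=|G|$. Indeed $|G|\le |nG|\cdot|G[n]|$, and since $G[n]$ is a bounded abelian group it is a direct sum of cyclic $p$-groups, so when $|G[n]|$ is infinite one has $|G[n]|=|Soc(G[n])|\le|Soc(G)|$. Hence $|G|\le\max(|nG|,|Soc(G)|)\le|H|$, and the contradiction with $|H|\le\aleph_0$ is immediate. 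The paper records this simply as ``it is obvious that $|H|=|G|$''.
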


\begin{proof}
Since $G$ is non-discrete, it is obvious that $(G, \mathcal{M}_{G})$ is non-discrete. Assume that $G$ does not admit a transversal group topology. By Theorem~\ref{th6}, there exists $n\in\mathbb{N}$ such that $H=nG+Soc(G)$ is precompact. Then $H$ has the Suslin property in $(G, \mathcal{M}_{G})$ by the assumption. Moreover, $H\in\mathcal{M}_{G}$ by \cite[Corollary 4.3]{DTY2005}, thus $H$ is open in $(G, \mathcal{M}_{G})$. It follows from Lemma~\ref{ll99} that $\mathcal{M}_{G}|_{H}=\mathcal{M}_{H}$, then $(H, \mathcal{M}_{H})$ has the Suslin property. Therefore, it follows from \cite[Corollary 3.12]{APTTW1998} that $H$ is countable. However, it is obvious that $|H|=|G|$ is uncountable, which is a contradiction. Hence $G$ admits a transversal group topology.
\end{proof}

Since a locally compact precompact subgroup is compact, we have the following proposition.

\begin{proposition}\label{cccc}
Let $G$ be a uncountable non-discrete locally compact abelian group such that each compact subgroup of $G$ has the Suslin property as a subspace in $(G, \mathcal{M}_{G})$. Then $G$ admits a transversal group topology.
\end{proposition}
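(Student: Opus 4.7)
The plan is to adapt the proof of Proposition~\ref{c4} in the obvious way: replace the precompact subgroup $H:=nG+Soc(G)$ arising from Theorem~\ref{th6} by its (compact) closure $\overline{H}$ in the original topology of $G$, so that the weaker hypothesis of the present proposition --- Suslin property required only on compact subgroups --- can be applied. This is exactly the maneuver signalled by the remark preceding the statement, namely that a locally compact precompact subgroup is compact.

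Concretely, suppose for contradiction that $G$ does not admit a transversal group topology. Theorem~\ref{th6} provides $n\in\N$ such that $H:=nG+Soc(G)$ is precompact, and \cite[Corollary~4.3]{DTY2005} gives $H\in\mathcal{M}_G$, so that $H$ is open in $(G,\mathcal{M}_G)$. Local compactness of $G$ implies that the closure $\overline{H}$ in the original topology is a compact subgroup of $G$. Being a subgroup that contains the open subgroup $H$, the set $\overline{H}$ is a union of cosets of $H$, and is therefore also open in $(G,\mathcal{M}_G)$. Applying the hypothesis to the compact subgroup $\overline{H}$, the subspace $\overline{H}$ of $(G,\mathcal{M}_G)$ has the Suslin property; Lemma~\ref{ll99} (with $\overline{H}$ in place of the subgroup) gives $\mathcal{M}_G|_{\overline{H}}=\mathcal{M}_{\overline{H}}$, so $(\overline{H},\mathcal{M}_{\overline{H}})$ has the Suslin property, and hence $\overline{H}$ is countable by \cite[Corollary~3.12]{APTTW1998}.

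This contradicts $|\overline{H}|\geq|H|=|G|$ being uncountable; the equality $|H|=|G|$ is precisely the ``obvious'' fact used at the end of the proof of Proposition~\ref{c4}, which transfers to the present setting without change. Consequently $G$ admits a transversal group topology. The single genuinely new ingredient compared to the proof of Proposition~\ref{c4} is the passage from the precompact subgroup $H$ to the compact subgroup $\overline{H}$, and the only step along the way that warrants a second thought is that $\overline{H}$ remains open in $(G,\mathcal{M}_G)$, which is immediate from the general principle that a subgroup of a topological group containing an open subgroup is itself open.
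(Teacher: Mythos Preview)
Your proof is correct and follows essentially the same approach as the paper's own proof: both pass from the precompact subgroup $nG+Soc(G)$ to its compact closure in the original topology, observe that this closure remains $\mathcal{M}_G$-open (the paper states this without your explicit justification via containment of an open subgroup), apply Lemma~\ref{ll99} and the Suslin hypothesis, and conclude countability via \cite[Corollary~3.12]{APTTW1998} to reach the same cardinality contradiction.
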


\begin{proof}
Assume that $G$ does not admit a transversal group topology. By Theorem~\ref{th6}, there exists $n\in\mathbb{N}$ such that $nG+Soc(G)$ is precompact, then $\overline{nG+Soc(G)}^{G}$ is compact since it is a locally compact precompact subgroup. Let $H=\overline{nG+Soc(G)}^{G}$. Since $nG+Soc(G)$ is open in $(G, \mathcal{M}_{G})$, it follows that $H$ is open in $(G, \mathcal{M}_{G})$. By Lemma~\ref{ll99} and the assumption, we can conclude that $\mathcal{M}_{H}$ has the Suslin property. By a similar proof of Proposition~\ref{c4}, we can obtain a contradiction. Hence $G$ admits a transversal group topology.
\end{proof}

\begin{remark}
The condition ``uncountable'' is important in Proposition~\ref{cccc}. Indeed, the compact topological group $G=\mathbb{Z}(4)^{\omega}$ is separable metrizable, and each compact subgroup of $G$ has the Suslin property in $(G, \mathcal{M}_{G})$ since it is countable; however, $G$ does not admit a transversal group topology since it is compact.
\end{remark}

Finally, we give an answer to Problem~\ref{p1} in the class of locally precompact abelian groups. First, we need some lemmas.

\begin{lemma}\label{l2}
If a topological group $H$ contains a dense locally precompact abelian subgroup $G$, then $H$ is locally precompact and abelian .
\end{lemma}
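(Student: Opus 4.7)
The plan is to prove the two assertions separately and independently. For abelianness of $H$, I would use continuity of the commutator map $c\colon H\times H\to H$ defined by $c(x,y)=xyx^{-1}y^{-1}$, combined with density. Since $G$ is abelian, $c$ vanishes identically on the subset $G\times G$, which is dense in $H\times H$. Because $H$ is Hausdorff (all topological groups in this paper are), the preimage $c^{-1}(e)$ is closed, so $c\equiv e$ on $H\times H$, i.e.\ $H$ is abelian.

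For local precompactness, I would start from a precompact open neighbourhood $V$ of $e$ in $G$ and choose an open set $U$ in $H$ with $U\cap G=V$. The first small observation I need is that for a subset $B\subset G$, being precompact in $G$ is the same as being precompact in the ambient group $H$: any neighbourhood $W$ of $e$ in $H$ restricts to a neighbourhood $W\cap G$ of $e$ in $G$, and a finite cover $B\subset F(W\cap G)$ with $F\subset G$ is also a finite cover $B\subset FW$ inside $H$ (and likewise on the other side). Hence $V$ is precompact in $H$.

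Next I would invoke the standard fact that in a topological group the closure of a precompact set is again precompact (the usual trick: given $W$, choose a symmetric open $W_{1}$ with $\overline{W_{1}}\subset W_{1}W_{1}\subset W$ and pass to closures in a finite cover $V\subset FW_{1}$, using that $\overline{FW_{1}}=F\overline{W_{1}}$). This gives that $\overline{V}^{H}$ is precompact in $H$. Finally, density of $G$ in $H$ together with openness of $U$ yields $U\subset\overline{U\cap G}^{H}=\overline{V}^{H}$, so $U$ is an open neighbourhood of $e$ in $H$ contained in a precompact set, hence itself precompact. Therefore $H$ is locally precompact.

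No step is genuinely hard; the only subtleties worth flagging are the equivalence of precompactness measured inside $G$ versus inside $H$ for subsets of $G$, and the fact that the closure of a precompact set in a topological group remains precompact. Both are routine, and together with the commutator-continuity argument they deliver the lemma in a few lines.
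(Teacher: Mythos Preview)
Your argument is correct. The paper's own proof is a two-line affair: it declares the abelianness of $H$ ``clear'' and obtains local precompactness by citing \cite[Lemma~3.7.5]{AT2008}; your commutator-map argument and your direct verification that $U\subset\overline{U\cap G}^{H}$ is precompact are exactly a self-contained unpacking of those two steps, so the approaches coincide in substance.
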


\begin{proof}
Clearly, $H$ is abelian. Moreover, it follows from \cite[Lemma 3.7.5]{AT2008} that $H$ locally precompact.
\end{proof}

\begin{lemma}\label{l1}
Let a topological group $H$ contain a dense locally precompact abelian subgroup $G$. If $G$ satisfies one of the following conditions, then $G\in\mbox{{\bf Trans}}$ if and only if $H\in\mbox{{\bf Trans}}$.

\smallskip
(i) $G$ is connected;

\smallskip
(ii) The subgroup Soc($G$) is dense in Soc($H$).
\end{lemma}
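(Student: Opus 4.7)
The plan is to apply Theorem~\ref{th6} simultaneously to $G$ and $H$. By Lemma~\ref{l2}, $H$ is locally precompact and abelian, so Theorem~\ref{th6} characterizes transversability for both in terms of the precompactness of $nG$ and $Soc(G)$ (respectively $nH$ and $Soc(H)$) for some $n\in\mathbb{N}$. A dense subgroup of $H$ is discrete exactly when $H$ is, so we may assume both $G$ and $H$ are non-discrete. I will also use throughout that precompactness is intrinsic: since $G$ carries the subspace topology from $H$, a subset $A\subset G$ is precompact in $G$ if and only if it is precompact in $H$.

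The implication $G\in\mbox{{\bf Trans}}\Rightarrow H\in\mbox{{\bf Trans}}$ holds without either extra hypothesis. If $H\notin\mbox{{\bf Trans}}$, pick $n$ with $nH+Soc(H)$ precompact; then its subset $nG+Soc(G)$ is precompact, and Theorem~\ref{th6} gives $G\notin\mbox{{\bf Trans}}$.

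For the converse, suppose $G\notin\mbox{{\bf Trans}}$ and pick $n$ with both $nG$ and $Soc(G)$ precompact. Continuity of the endomorphism $x\mapsto nx$ together with the density of $G$ in $H$ force $nH\subset\overline{nG}^{H}$; since closures of precompact sets are precompact in any topological group, $nH$ is precompact. It remains to show $Soc(H)$ is precompact. Under condition (ii), the density hypothesis immediately yields $Soc(H)\subset\overline{Soc(G)}^{H}$, which is precompact as the closure of $Soc(G)$. Under condition (i), $G$ connected implies $H=\overline{G}^{H}$ is connected, so by Theorem~\ref{th8} the Weil completion $\widetilde{H}$ is locally compact, connected, and abelian; the Pontryagin--van Kampen structure theorem then gives $\widetilde{H}\cong\mathbb{R}^{k}\times K$ for some $k\geq 0$ and some compact connected abelian group $K$. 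Consequently $\widetilde{H}[p]=\{0\}\times K[p]$ for every prime $p$, so $Soc(\widetilde{H})\subset\{0\}\times K$ is precompact, and hence so is $Soc(H)\subset Soc(\widetilde{H})$. In each case, Theorem~\ref{th6} gives $H\notin\mbox{{\bf Trans}}$.

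I expect the main technical step to be the socle bound in case (i): connectedness of $G$ by itself does not obviously limit $Soc(G)$, and one is forced to pass to the locally compact completion $\widetilde{H}$ and invoke the structure theorem to see that the socle is swept into the maximal compact direct factor. The remaining ingredients---the intrinsic nature of precompactness, the stability of precompactness under closure, and the inclusion $nH\subset\overline{nG}^{H}$---are standard consequences of density and continuity.
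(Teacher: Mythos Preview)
Your argument is correct. For case (ii) it is essentially the paper's proof. For case (i) you take a genuinely different route: both you and the paper pass to the Weil completion $\widetilde{H}\cong\mathbb{R}^{k}\times K$ with $K$ compact, but the paper then argues that $k=0$ (if some $x\in G$ had nonzero $\mathbb{R}^{k}$-component, $\langle x\rangle$ would be an infinite discrete cyclic subgroup of $G$, forcing $G\in\mbox{{\bf Trans}}$ via \cite[Corollary 3.5]{DTY2005}), so that $\widetilde{H}$ is compact and hence $H\notin\mbox{{\bf Trans}}$ directly. You instead leave $k$ alone and simply observe that $\mathbb{R}^{k}$ is torsion-free, so $Soc(\widetilde{H})\subset\{0\}\times K$ is precompact, and then finish with Theorem~\ref{th6}. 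Your approach is a bit shorter and has the virtue of treating cases (i) and (ii) uniformly, both ending with the same invocation of Theorem~\ref{th6}; the paper's approach extracts the stronger intermediate fact that $\widetilde{H}$ itself is compact whenever $G$ is connected and $G\notin\mbox{{\bf Trans}}$.
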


\begin{proof}
By Lemma~\ref{l2}, we see that $H$ is abelian and locally precompact. Clearly, $H\in\mbox{{\bf Trans}}$ if $G\in\mbox{{\bf Trans}}$. Hence it suffices to consider the sufficiency. Assume that $H\in\mbox{{\bf Trans}}$ and $G\not\in\mbox{{\bf Trans}}$.

(i) Assume that $G$ is connected. Next, we will obtain a contradiction. Indeed, it follows from Theorem~\ref{th8} that $H$ can embed as a dense topological subgroup of a connected locally compact group $\widetilde{H}$, then $G$ is also a dense topological subgroup of $\widetilde{H}$. Clearly, $\widetilde{H}\in\mbox{{\bf Trans}}$. As every locally compact abelian connected group, $\widetilde{H}$ is a direct sum of a vector group $\mathbb{R}^{n}$ and a compact group $H_{0}$. Let us prove that $n=0$, which implies a contradiction since $\widetilde{H}$ is a compact group and $\widetilde{H}\in\mbox{{\bf Trans}}$. Since $G$ is dense in $\widetilde{H}$, it suffice to prove that $G\subset \{0\}\times H_{0}$. Indeed, assume $x=(r, g)\in\widetilde{H}\cap G\subset\mathbb{R}^{n}\times H_{0}$, where $r\neq 0$. Then $x$ generates a discrete subgroup $\langle x\rangle$ of $\widetilde{H}$ (as it projects onto the discrete subgroup $\langle r\rangle$ of $\mathbb{R}^{n}$). Then $G\in\mbox{{\bf Trans}}$ by \cite[Corollary 3.5]{DTY2005}, which is a contradiction. Therefore, $\widetilde{H}=H_{0}$ is compact. Hence $G\in\mbox{{\bf Trans}}$.

(ii) Assume the subgroup Soc($G$) is dense in Soc($H$). Then it follows from Theorem~\ref{th6} that there exists $n\in\mathbb{N}$ such that $nG$ and $Soc(G)$ are precompact. Since $G$ and Soc($G$) are dense in $H$ and Soc($H$) respectively, it is easy to see that $nH\subset\overline{nG}^{H}$ and $Soc(H)\subset\overline{Soc(G)}^{H}$. Then it follows from \cite[Lemma 3.7.5]{AT2008} that $nH$ and $Soc(H)$ are all precompact subgroup in $H$, hence $nH+Soc(H)$ is precompact subgroup in $H$. Therefore, $H$ does not admit a transversal group topology by Theorem~\ref{th6}, which is a contradiction. Therefore, $G\in\mbox{{\bf Trans}}$.
\end{proof}

\begin{lemma}\label{lll0}
Let $G$ be a subgroup of $H$ such that $Soc(G)$ is dense in $Soc(H)$. If $K$ is a divisible subgroup of $G$, then $Soc(G/K)$ is dense in $Soc(H/K)$.
\end{lemma}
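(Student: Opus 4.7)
The plan is to prove this by directly verifying the density condition: for any $\bar{y}\in Soc(H/K)$ and any open neighborhood $U$ of $\bar{y}$ in $H/K$, I will produce an element of $Soc(G/K)$ lying in $U$. Write $\pi\colon H\to H/K$ for the canonical projection; it is continuous and open.

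First I would establish the easy containment $\pi(Soc(G))\subseteq Soc(G/K)$. If $s\in G[p]$, then $ps=0$ forces $p\,\pi(s)=0$, so $\pi(s)\in (G/K)[p]$, and the containment extends to finite sums coming from distinct primes.

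Next comes the key lifting step, which is where divisibility of $K$ is essential. Decompose $\bar{y}=\bar{y}_1+\cdots+\bar{y}_n$ with $\bar{y}_i\in(H/K)[p_i]$ for distinct primes $p_i$, and pick any lifts $y_i\in H$ of $\bar{y}_i$. Then $p_i y_i\in K$, and since $K$ is divisible I can find $k_i\in K$ with $p_i k_i = p_i y_i$. Setting $z_i:=y_i-k_i$, one gets $p_i z_i=0$, so $z_i\in H[p_i]$, and $\pi(z_i)=\bar{y}_i$. Summing, $z:=z_1+\cdots+z_n$ lies in $Soc(H)$ with $\pi(z)=\bar{y}$.

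Finally, the preimage $\pi^{-1}(U)$ is open in $H$ and contains $z$, so by the hypothesis that $Soc(G)$ is dense in $Soc(H)$, there exists $g\in Soc(G)\cap\pi^{-1}(U)$. Then $\pi(g)\in U$ and $\pi(g)\in\pi(Soc(G))\subseteq Soc(G/K)$, which proves the density. The main obstacle is the lifting step: without divisibility of $K$, there is no way in general to promote an arbitrary lift $y_i$ of $\bar{y}_i\in(H/K)[p_i]$ to a genuine $p_i$-torsion element of $H$, so this is exactly where the divisibility hypothesis is used.
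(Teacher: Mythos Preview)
Your proof is correct and follows essentially the same approach as the paper: both use divisibility of $K$ to lift an element of $(H/K)[p]$ to a genuine $p$-torsion element of $H$, and then invoke the density hypothesis on $Soc(G)\subset Soc(H)$. The only cosmetic difference is organizational: the paper verifies density of $Soc(G/K)$ in each single $(H/K)[p]$ (implicitly using that the closure of a subgroup is a subgroup to handle sums), whereas you decompose a general $\bar{y}\in Soc(H/K)$ into its primary components and lift them all at once.
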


\begin{proof}
If $Soc(H/K)=Soc(G/K)$, then it is obvious, hence we may assume that $Soc(H/K)\setminus Soc(G/K)\neq\emptyset$. It suffices to prove that $(H/K)[p]$ is contained in the closure of $Soc(G/K)$ in $H/K$ for each $p\in\mathbb{P}$. Take an arbitrary $g+K\in (H/K)[p]\setminus Soc(G/K)$ for some $g\in H$ and $p\in\mathbb{P}$. We will prove that $\pi(g+U)\cap Soc(G/K)\neq\emptyset$ for any open neighborhood $U$ of $0$ in $H$, where $\pi: H\rightarrow H/K$ is the natural quotient mapping. Indeed, it is obvious that $pg\in K$, then there exists $h\in K$ such that $pg=h$. Since $K$ is divisible, we can find $f\in K$ such that $pg=pf$, thus $g-f\in Soc(H)$. Since $Soc(G)$ is dense in $Soc(H)$, the open set $(g-f+U)\cap Soc(G)\neq\emptyset$, hence there exists $u\in U$ such that $g-f+u\in Soc(G)$, then $g+u\in K+G=G$ and $g+u+K\in Soc(G/K)\cap\pi(g+U)$ since $f\in K$. Therefore, $Soc(G/K)$ is dense in $Soc(H/K)$.
\end{proof}

Let $G$ be a topological group and $K$ a compact normal subgroup of $G$. For convenience, we say that $G$ satisfies $\blacktriangle(G, K)$ if $G\in\mbox{{\bf Trans}}$ implies $G/K\in\mbox{{\bf Trans}}$.

\begin{theorem}\label{th7}
Let $H$ be a topological group containing a dense subgroup $G$ being locally precompact and abelian, and let $K$ be a compact subgroup of $G$. If one of the following conditions holds, then $G$ satisfies $\blacktriangle(G, K)$ if and only if $H$ satisfies $\blacktriangle(H, K)$.

\smallskip
(i) $G$ is connected;

\smallskip
(ii) The subgroup Soc($G$) is dense in Soc($H$), and $K$ is a divisible subgroup.
\end{theorem}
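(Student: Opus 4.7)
The plan is to reduce the biconditional $\blacktriangle(G,K) \iff \blacktriangle(H,K)$ to two applications of Lemma~\ref{l1}, one for the pair $G \subseteq H$ itself and one for the quotient pair $G/K \subseteq H/K$. Unpacking the definition, $\blacktriangle(G,K)$ says: if $G \in \mathbf{Trans}$ then $G/K \in \mathbf{Trans}$. So it suffices to establish the two equivalences $G \in \mathbf{Trans} \iff H \in \mathbf{Trans}$ and $G/K \in \mathbf{Trans} \iff H/K \in \mathbf{Trans}$, since then the two conditional statements $G \in \mathbf{Trans} \Rightarrow G/K \in \mathbf{Trans}$ and $H \in \mathbf{Trans} \Rightarrow H/K \in \mathbf{Trans}$ are visibly equivalent.

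First I would note that by Lemma~\ref{l2} the group $H$ is abelian and locally precompact; in particular $K$ is normal in $H$ (abelianness) and compact, so the quotient $H/K$ is a well-defined Hausdorff topological group and $G/K$ is a subgroup of it. Since the quotient map $\pi\colon H \to H/K$ is continuous and surjective, $G/K = \pi(G)$ is dense in $H/K$. Moreover, $G/K$ is abelian, and it is locally precompact as the quotient of the locally precompact abelian group $G$ by the compact subgroup $K$ (standard fact: a neighborhood basis of the identity in $G$ consisting of precompact sets projects to such a basis in $G/K$). Thus the hypotheses on the ambient pair in Lemma~\ref{l1} are satisfied for $G/K \subseteq H/K$.

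It remains to check that the selected side-condition (i) or (ii) transfers to the quotient pair. Under (i), connectedness of $G$ gives connectedness of $G/K$ as a continuous image, so condition (i) of Lemma~\ref{l1} applies to $G/K \subseteq H/K$. Under (ii), we have $Soc(G)$ dense in $Soc(H)$ and $K$ divisible (hence in particular divisible as a subgroup of $G$), so Lemma~\ref{lll0} delivers that $Soc(G/K)$ is dense in $Soc(H/K)$, which is condition (ii) of Lemma~\ref{l1}. In either case, Lemma~\ref{l1} yields $G/K \in \mathbf{Trans} \iff H/K \in \mathbf{Trans}$. Combined with the already-known equivalence $G \in \mathbf{Trans} \iff H \in \mathbf{Trans}$ (applied once for the ambient pair), the equivalence $\blacktriangle(G,K) \iff \blacktriangle(H,K)$ follows.

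The argument is essentially a packaging step, so there is no deep obstacle; the main thing to be careful about is checking that every hypothesis of Lemma~\ref{l1} genuinely transfers to the quotient pair. The delicate point is condition (ii): one must invoke Lemma~\ref{lll0} to push the density of socles through the quotient, and this is exactly where divisibility of $K$ gets used. If $K$ were not divisible, one could lose elements of $Soc(H/K)$ whose lifts to $H$ are not in $Soc(H)$, and the density argument would break.
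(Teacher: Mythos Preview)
Your proof is correct and follows the same approach as the paper: reduce to two applications of Lemma~\ref{l1}, one for the pair $G\subseteq H$ and one for $G/K\subseteq H/K$, using Lemma~\ref{lll0} under hypothesis (ii) to transfer the socle-density condition to the quotient. The paper's proof is simply a one-line citation of Lemmas~\ref{l1} and~\ref{lll0}; you have unpacked exactly the verifications those citations require.
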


\begin{proof}
Clearly, $G$ and $G/K$ are dense in $H$ and $H/K$ respectively. Moreover, $G/K$ and $H/K$ are all locally precompact. By Lemmas~\ref{l1} and~\ref{lll0}, we easily see that theorem holds.
\end{proof}

\begin{theorem}
Let $G$ be a locally precompact abelian topological group, and let $K$ be a compact subgroup of $G$. If one of the following conditions holds, then $G$ satisfies $\blacktriangle(G, K)$.

\smallskip
(i) $G$ is connected;

\smallskip
(ii) $Soc(G)$ is dense in $G$ and $K$ is a divisible subgroup.
\end{theorem}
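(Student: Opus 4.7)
The plan is to reduce the theorem to the locally compact abelian setting via Weil's theorem and then establish the reduced statement by pulling precompactness back through the quotient homomorphism and invoking Theorem~\ref{th6}. Concretely, I would first apply Theorem~\ref{th8} to embed $G$ as a dense subgroup of a locally compact abelian group $\widetilde{G}$; the subgroup $K$ remains compact in $\widetilde{G}$. Under hypothesis~(i), $G$ being connected is precisely hypothesis~(i) of Theorem~\ref{th7}. Under hypothesis~(ii), density of $Soc(G)$ in $G$ combined with density of $G$ in $\widetilde{G}$ forces $Soc(G)$ to be dense in $\widetilde{G}$; since $Soc(G)\subseteq Soc(\widetilde{G})\subseteq \widetilde{G}$, this yields density of $Soc(G)$ in $Soc(\widetilde{G})$, and together with the divisibility of $K$ this is hypothesis~(ii) of Theorem~\ref{th7}. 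Either way, Theorem~\ref{th7} will give $\blacktriangle(G,K)\Leftrightarrow\blacktriangle(\widetilde{G},K)$, so it suffices to establish $\blacktriangle(\widetilde{G},K)$ for the locally compact abelian group $\widetilde{G}$.

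For this reduced claim I would argue by contrapositive, assuming $\widetilde{G}/K\notin\mbox{{\bf Trans}}$ and deducing $\widetilde{G}\notin\mbox{{\bf Trans}}$. The quotient $\widetilde{G}/K$ is locally compact abelian. If $\widetilde{G}/K$ is finite then $\widetilde{G}$ is compact and hence not transversable; if $\widetilde{G}/K$ is infinite discrete then $\widetilde{G}/K\in\mbox{{\bf Trans}}$ by \cite[Corollary 3.9]{DTY2005} applied to its own center, contradicting the assumption. So we may assume $\widetilde{G}/K$ is non-discrete, and Theorem~\ref{th6} furnishes $k\in\mathbb{N}$ such that $k(\widetilde{G}/K)+Soc(\widetilde{G}/K)$ is precompact in $\widetilde{G}/K$, making both $k(\widetilde{G}/K)$ and $Soc(\widetilde{G}/K)$ precompact. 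The core step is to pull this back through the quotient map $q\colon\widetilde{G}\to\widetilde{G}/K$: because $K$ is compact, the $q$-preimage of any precompact subset of $\widetilde{G}/K$ is precompact in $\widetilde{G}$. Using the containments $k\widetilde{G}\subseteq q^{-1}(k(\widetilde{G}/K))$ and $Soc(\widetilde{G})\subseteq q^{-1}(Soc(\widetilde{G}/K))$, one concludes that $k\widetilde{G}$ and $Soc(\widetilde{G})$, and hence their sum, are precompact in $\widetilde{G}$, so Theorem~\ref{th6} forces $\widetilde{G}\notin\mbox{{\bf Trans}}$, completing the contrapositive.

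The main obstacle is the pullback step: carefully justifying that, for compact $K$, the $q$-preimage of a precompact subset of $\widetilde{G}/K$ is precompact in $\widetilde{G}$. This is essentially the properness of the quotient by a compact subgroup, and can be established by a short covering argument---given a neighborhood $U$ of $0$ in $\widetilde{G}$ and a symmetric $V$ with $V+V\subseteq U$, precompactness of $A\subseteq\widetilde{G}/K$ yields a finite $q(V)$-cover of $A$, precompactness of $K$ yields a finite $V$-cover of $K$, and combining the two produces a finite $U$-cover of $q^{-1}(A)$. Once that is in place, the rest of the argument is a routine assembly of Theorems~\ref{th8}, \ref{th7}, and~\ref{th6}.
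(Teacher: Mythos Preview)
Your approach matches the paper's: embed $G$ densely into its locally compact Weil completion $\widetilde{G}$, invoke Theorem~\ref{th7} to reduce $\blacktriangle(G,K)$ to $\blacktriangle(\widetilde{G},K)$, and then settle the locally compact abelian case. The only difference is that the paper dispatches this last step by citing the corresponding result already established in \cite{DTY2006}, whereas you supply a direct argument via Theorem~\ref{th6} and a precompactness-pullback through the quotient by the compact subgroup $K$.
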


\begin{proof}
In \cite{DTY2006}, D. Dikranjan, M. Tkachenko and I. Yaschenko proved that an abelian locally compact topological group $G$ satisfies $\blacktriangle(G, K)$ for any compact subgroup $K$ of $G$. Moreover, since $G$ is a locally precompact abelian topological group, it follows from Theorem~\ref{th6} that $G$ can embed as a dense subgroup of a locally compact abelian group $\widetilde{G}$. Hence we obtain the result by Theorem~\ref{th7}.
\end{proof}

 \maketitle
\section{central topological groups}
In this section, we mainly discuss central topological groups and give some partial answers to Problems~\ref{p1} and~\ref{p2}. Recall that a locally compact topological group $G$ is a {\it central topological group} \cite{GM1967} if $G/Z(G)$ is compact. First, we give a characterization of central topological groups that admit a transversal group topology.

\begin{theorem}\label{th5}
Let $(G, \tau)$ be a non-discrete central topological group. Then $G$ admits a transversal group topology if and only if $G$ satisfies one of the following conditions:

\smallskip
(1) $G$ contains a central subgroup which is topologically isomorphic to $\mathbb{Z}$, where $\mathbb{Z}$ is endowed with a discrete topology.

\smallskip
(2) There exists an open compact subgroup $F$ of $G$ such that $F$ is not open in $(G, \mathcal{M}_{G})$.
\end{theorem}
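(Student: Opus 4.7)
Prove the equivalence by treating sufficiency and necessity separately.

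\textbf{Sufficiency.} For condition (1), invoke \cite[Corollary 3.9]{DTY2005}: the presence of a central subgroup topologically isomorphic to discrete $\mathbb{Z}$ gives a transversal topology. For (2), let $F$ be an open compact subgroup with $F\notin\mathcal{M}_G$. First exploit centrality to normalise $F$: the normalizer $N_G(F)$ is open in $G$ (standard for open compact subgroups of locally compact groups) and contains $Z(G)$, so $G/N_G(F)$ is compact and discrete, hence finite. Therefore the normal core $N=\bigcap_{g\in G}gFg^{-1}$ is a finite intersection of open compact conjugates of $F$, and is itself an open compact normal subgroup of $G$. For any group topology $\eta$ on $G$, $F$ is $\eta$-open iff $N$ is $\eta$-open (conjugates and finite intersections of opens are open, and $N\subseteq F$ makes $F$ open once $N$ is); hence $N\notin\mathcal{M}_G$. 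Next, the coset group topology $\tau_N$ (whose opens are unions of $N$-cosets) is a genuine group topology making $N$ open, and is coarser than every group topology in which $N$ is open. If $N$ were open in every maximal non-discrete group topology, $\tau_N$ would sit inside each, forcing $\tau_N\subseteq\mathcal{M}_G$ and hence $N\in\mathcal{M}_G$ — a contradiction. So some maximal non-discrete Hausdorff group topology $\sigma$ satisfies $N\notin\sigma$, hence $F\notin\sigma$. Since $F\in\tau$, $\tau\vee\sigma\supsetneq\sigma$, and the maximality of $\sigma$ among non-discrete group topologies forces $\tau\vee\sigma$ to be discrete, establishing that $\sigma$ is transversal to $\tau$.

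\textbf{Necessity.} Assume $G\in\mbox{{\bf Trans}}$ with $\tau\vee\sigma$ discrete for some $\sigma$, and extend $\sigma$ to a maximal non-discrete Hausdorff group topology $\sigma'\supseteq\sigma$; then $\tau\vee\sigma'\supseteq\tau\vee\sigma$ is also discrete. Split on whether $G$ has an open compact subgroup. If $G$ has one, say $F$: choosing $V\in\sigma'$ with $F\cap V=\{e\}$ (from discreteness of $\tau\vee\sigma'$ and $F\in\tau$) gives $\sigma'|_F$ discrete; the non-discreteness of $\sigma'$ on $G$ then forces $F\notin\sigma'$, and $\mathcal{M}_G\subseteq\sigma'$ yields $F\notin\mathcal{M}_G$, i.e.\ (2). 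If $G$ has no open compact subgroup, then $C(G)$ is non-compact. As a connected locally compact group with $C(G)/(C(G)\cap Z(G))$ compact (via embedding into $G/Z(G)$), the structure theorem gives $C(G)\cong\mathbb{R}^n\times K$ with $K$ compact connected and $n\geq 1$. Centrality constrains the $G$-conjugation action on $\mathbb{R}^n\subseteq Z(C(G))$ to factor through a compact subgroup of $GL_n(\mathbb{R})$, yielding a $G$-invariant decomposition $\mathbb{R}^n=(\mathbb{R}^n)^G\oplus V$; the complement $V$ satisfies $V\cap Z(G)=0$ and embeds into the compact $G/Z(G)$, so $V=0$. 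Hence $\mathbb{R}^n\subseteq Z(G)$, and any discrete $\mathbb{Z}\subseteq\mathbb{R}\subseteq\mathbb{R}^n$ establishes (1).

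\textbf{Main obstacle.} The delicate step is the necessity in the no-open-compact-subgroup case: placing the full $\mathbb{R}^n\subseteq Z(C(G))$ inside $Z(G)$. This relies on compactness of $G/Z(G)$ (so the $G$-action on $\mathbb{R}^n$ is by a compact group and splits into invariants plus complement), and on the triviality of the complement via its embedding into the compact $G/Z(G)$. The sufficiency of (2) is cleaner conceptually but still leverages centrality crucially — through the openness of $N_G(F)$ and the finiteness of $G/N_G(F)$ — to replace the possibly non-normal $F$ by its open compact normal core $N$.
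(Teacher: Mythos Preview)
Your sufficiency argument is correct but more elaborate than the paper's. For (2) the paper simply invokes \cite[Proposition~3.12]{DTY2005} to obtain $\tau|_F\le\mathcal{P}_F=\mathcal{P}_G|_F\le\mathcal{M}_G|_F$, whence $\tau\not\le\mathcal{M}_G$ if and only if $F\notin\mathcal{M}_G$, and then applies \cite[Lemma~3.1]{DTY2005}; no passage to a normal core is needed. In your necessity argument for the ``open compact subgroup'' case, the step ``choose $V\in\sigma'$ with $F\cap V=\{e\}$'' does not follow from $F\in\tau$ alone---you need the \emph{compactness} of $F$ in $\tau$ together with \cite[Corollary~3.14]{DTY2005} (this is exactly how the paper argues in its Theorem~\ref{t9}).

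The genuine gap is in your necessity argument when $G$ has no open compact subgroup. The assertion that $C(G)/(C(G)\cap Z(G))$ is compact ``via embedding into $G/Z(G)$'' is unjustified: a continuous injective homomorphism from a locally compact group into a compact group need not have closed image (an irrational winding line in a torus is the standard obstruction), so you cannot conclude that $C(G)$ is itself a central topological group, and hence the decomposition $C(G)\cong\mathbb{R}^n\times K$ is not available. Even granting it, the $\mathbb{R}^n$ factor is not characteristic in $\mathbb{R}^n\times K$ (only the maximal compact $K$ is), so $G$-conjugation need not preserve your chosen copy of $\mathbb{R}^n$, and the subsequent invariant-decomposition argument is not well posed. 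The paper avoids all of this by applying the Grosser--Moskowitz structure theorem \cite[Theorem~4.4]{GM1967} directly to $G$: one has $G=V\times O$ with $V$ a \emph{central} vector group and $O$ containing an open compact normal subgroup $F$. If (1) fails then $V=\{0\}$ (any nonzero $v\in V$ generates a discrete central copy of $\mathbb{Z}$), so $G=O$ already has an open compact $F$; the biconditional $\tau\not\le\mathcal{M}_G\Leftrightarrow F\notin\mathcal{M}_G$ established above then yields (2) at once. You should replace your $C(G)$-based detour with this direct use of the structure theorem for $G$.
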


\begin{proof}
Sufficiency. Assume that $G$ contains a central subgroup which is topologically isomorphic to $\mathbb{Z}$. Then $G$ admits a transversal group topology by \cite[Corollary 3.5]{DTY2005}. Suppose that $G$ contains an open compact subgroup $F$ such that $F$ is not open in $(G, \mathcal{M}_{G})$. Since $$\tau\mid_{F}\leq\mathcal{P}_{F}=\mathcal{P}_{G}\mid_{F}\leq\mathcal{M}_{G}\mid_{F}$$ by \cite[Proposition 3.12]{DTY2005}, it follows that $\tau\nleqslant \mathcal{M}_{G}$ if and only if $F\not\in\mathcal{M}_{G}$. Then $G$ admits a transversal group topology by \cite[Lemma 3.1]{DTY2005}.

Necessity. Assume that $G$ admits a transversal group topology, and assume that $G$ does not contain a central subgroup which is topologically isomorphic to $\mathbb{Z}$, where $\mathbb{Z}$ is endowed with a discrete topology. Since $G$ is a central topological group, it follows from \cite[Theorem 4.4]{GM1967} that $G=V\times O$, where $V$ is a central vector subgroup in $G$ and $O$ is a locally compact group which contains a compact normal subgroup $F$ such that $F$ is open in $O$. We claim that $V=\{0\}$. Suppose not, take any $a=(x, y)\in G\cap(V\times F)$ such that $x\in V\setminus\{0\}$. It easily see that the the infinite cyclic group $<a>$ is a discrete subgroup of $G$ which is topologically isomorphic to the discrete group $\mathbb{Z}$, that is a contradiction. Therefore, $G=O$. Then $G$ contains an open compact subgroup $F$. Assume $F$ is open in $(G, \mathcal{M}_{G})$. Then it follows from the proof of the sufficiency above that $\tau\leq\mathcal{M}_{G}$. Hence $G$ does not admit a transversal group topology by \cite[Lemma 3.1]{DTY2005}, which is a contradiction. Therefore, $F$ is not open in $(G, \mathcal{M}_{G})$.
\end{proof}

\begin{theorem}\label{th2}
Let $G$ be a connected central topological group. Then $G$ admits a transversal group topology if and only if $G$ is not compact.
\end{theorem}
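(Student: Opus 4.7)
The plan is to establish the equivalence by combining the characterization from Theorem~\ref{th5} with the Grosser--Moskowitz structure theorem for central topological groups that was already invoked in the proof of Theorem~\ref{th5}.

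For the necessity, I would simply observe that every compact topological group is precompact, and so \cite[Theorem 3.13]{DTY2005} (already cited in this paper) forbids it from admitting a transversal group topology. Thus, if $G$ is transversable, then $G$ cannot be compact.

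For the sufficiency, I would assume $G$ is not compact and aim to verify condition (1) of Theorem~\ref{th5}. Applying \cite[Theorem 4.4]{GM1967} exactly as in the proof of Theorem~\ref{th5}, I would write $G = V \times O$, where $V$ is a central vector subgroup of $G$ and $O$ is a locally compact group containing a compact open normal subgroup $F$. Since $G$ is connected, the projection $O$ is also connected; because $F$ is an open subgroup of the connected group $O$ it is clopen and hence equal to $O$, so $O$ is compact. The hypothesis that $G$ is not compact then forces $V \neq \{0\}$. Picking any $x \in V \setminus \{0\}$, the cyclic subgroup $\langle (x,0) \rangle = \langle x \rangle \times \{0\}$ inherits the discrete topology from $V$ (since $\langle x \rangle$ is a discrete subgroup of the vector group $V$) and is central in $G$ because $V$ is. Thus $G$ contains a central subgroup topologically isomorphic to the discrete group $\mathbb{Z}$, so Theorem~\ref{th5}(1) yields that $G$ is transversable.

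The main obstacle \emph{a priori} would be to exhibit the required central discrete copy of $\mathbb{Z}$ inside $G$, but the Grosser--Moskowitz decomposition does essentially all of the work. The only non-formal step is the observation that connectedness of $G$ collapses the factor $O$ to a compact group, so that the vector part $V$ must absorb all of the non-compactness and immediately supplies the discrete central $\mathbb{Z}$ needed to invoke Theorem~\ref{th5}.
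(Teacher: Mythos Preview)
Your argument is correct and follows essentially the same route as the paper: both directions rest on the Grosser--Moskowitz decomposition $G = V \times (\text{compact})$, the observation that non-compactness of $G$ forces $V \neq \{0\}$, and then the discrete central copy of $\mathbb{Z}$ inside $V$ (the paper cites \cite[Corollary~3.5]{DTY2005} directly rather than passing through Theorem~\ref{th5}, and for the compact direction it quotes \cite[Theorem~2.2]{DTY2006} instead of \cite[Theorem~3.13]{DTY2005}, but these are interchangeable). Your extra step of deducing compactness of $O$ from connectedness is fine; the paper simply invokes the connected version of \cite[Theorem~4.4]{GM1967}, which already gives the compact factor.
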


\begin{proof}
If $G$ is compact, then it is obvious that $G$ does not admit a transversal group topology by \cite[Theorem 2.2]{DTY2006}. Hence it suffices to consider that $G$ is not compact. Since $G$ is a connected central topological group, it follows from \cite[Theorem 4.4]{GM1967} that $G=V\times K$, where $V$ is a central vector group in $G$ and $K$ is a compact subgroup. Since $G$ is not compact, $V$ is non-trivial. Then it follows from \cite[Corollary 3.5]{DTY2005} that $G$ admits a transversal group topology.
\end{proof}

Next, we prove that each connected central topological group $G$ satisfies $\blacktriangle(G, K)$ for any compact normal subgroup $K$ of $G$.

\begin{theorem}\label{th3}
If $G$ is a connected central topological group and $K$ is a compact normal subgroup of $G$, then $G$ satisfies $\blacktriangle(G, K)$.
\end{theorem}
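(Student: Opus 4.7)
The plan is to reduce the statement to Theorem~\ref{th2}, which characterizes transversability inside the class of connected central topological groups as non-compactness. From $G\in\mbox{{\bf Trans}}$, Theorem~\ref{th2} immediately yields that $G$ is not compact. It therefore suffices to establish two things about the quotient $G/K$: (a) that it is again a connected central topological group, so that Theorem~\ref{th2} applies to it; and (b) that $G/K$ is not compact. Once these are in hand, Theorem~\ref{th2} gives $G/K\in\mbox{{\bf Trans}}$, which is exactly $\blacktriangle(G,K)$.

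For (a), connectedness passes to the quotient for free, and local compactness of $G/K$ is standard when one factors out a compact normal subgroup. For the Grosser--Moskowitz centrality, I would first observe that $Z(G)K/K\subseteq Z(G/K)$: if $x\in Z(G)$, then $(xK)(yK)=xyK=yxK=(yK)(xK)$ for every $y\in G$. By the third isomorphism theorem,
$$
(G/K)\big/\bigl(Z(G)K/K\bigr)\ \cong\ G/(Z(G)K)
$$
is a continuous image of the compact group $G/Z(G)$, hence compact. Since $(G/K)/Z(G/K)$ is a further quotient of this group, it is also compact, and therefore $G/K$ is central in the sense of Grosser--Moskowitz.

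For (b), if $G/K$ were compact then, since $K$ is compact, $G$ itself would be compact: a relatively compact open neighborhood $U$ of $e$ projects to an open set in $G/K$, finitely many translates of which cover the compact group $G/K$, so finitely many translates of the relatively compact set $UK$ cover $G$. This contradicts the non-compactness of $G$ obtained from Theorem~\ref{th2}. Combining (a) and (b), Theorem~\ref{th2} applied to $G/K$ produces a transversal group topology on $G/K$.

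The whole argument is essentially bookkeeping on top of Theorem~\ref{th2}. The step that deserves the most care is the verification that $G/K$ remains a central topological group, since it requires the commutator-based inclusion $Z(G)K/K\subseteq Z(G/K)$ together with the isomorphism $(G/K)/(Z(G)K/K)\cong G/(Z(G)K)$; the non-compactness transfer in (b) uses only the standard fact that a compact extension of a compact group is compact. I do not anticipate any serious obstacle beyond these routine checks.
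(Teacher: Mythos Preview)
Your proposal is correct and follows essentially the same route as the paper: use Theorem~\ref{th2} to pass from $G\in\mbox{{\bf Trans}}$ to non-compactness of $G$, verify that $G/K$ is again a non-compact connected central topological group, and apply Theorem~\ref{th2} once more. The only difference is cosmetic: where you prove centrality of $G/K$ directly via $Z(G)K/K\subseteq Z(G/K)$ and the third isomorphism theorem, the paper simply cites \cite[(2) of Theorem 2.1]{GM1967} for the same fact.
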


\begin{proof}
Assume that $G\in\mbox{{\bf Trans}}$, then $G$ is non-compact by Theorem~\ref{th2}; hence $G/K$ is a non-compact, connected and locally compact topological group. From \cite[(2) of Theorem 2.1]{GM1967}, we see that $G/K$ is a central topological group, hence it admits a transversal group topology by Theorem~\ref{th2}.
\end{proof}

In Theorem~\ref{th3}, $G$ is a connected central topological group. Then it is natural to consider the class of the non-connected locally compact groups. Indeed, we have the following Theorem~\ref{t9}. Recall that a space $X$ is {\it totally disconnected} if each connected subspace is a singleton.

\begin{theorem}\label{t9}
If $(G, \tau)$ is a locally compact totally disconnected group and $K$ is a compact normal subgroup of $G$, then $G$ satisfies $\blacktriangle(G, K)$.
\end{theorem}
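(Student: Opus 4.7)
My plan is to construct a non-discrete Hausdorff group topology $\delta'$ on $G/K$ transversal to $\tau/K$, starting from a transversal $\delta$ to $\tau$ on $G$.

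First I would invoke van Dantzig's theorem: $(G,\tau)$ admits a base at $e$ of open compact subgroups, and since $K$ is compact normal, $UK=KU$ is an open compact subgroup of $G$ for each open compact $U$, making $(G/K,\tau/K)$ locally compact totally disconnected with base $\{UK/K\}$ at $eK$. Assuming $(G,\tau)\in\mbox{{\bf Trans}}$, fix a non-discrete Hausdorff group topology $\delta$ on $G$ with $\delta\vee\tau$ discrete. The discreteness of $\delta\vee\tau$ at $e$ yields a $\delta$-open set $V_0\ni e$ and an open compact subgroup $W_0\subset(G,\tau)$ with $V_0\cap W_0=\{e\}$; replacing $W_0$ by $KW_0$, I may assume $K\subset W_0$.

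The first technical step is to produce $K$-saturated sets $V,W$ satisfying $V\cap W\subset K$. Set $V_1=\bigcup_{k\in K}kV_0k^{-1}$, a $\delta$-open and $K$-conjugation-invariant neighborhood of $e$. Since every $k\in K\subset W_0$ satisfies $kW_0k^{-1}=W_0$, one has $kV_0k^{-1}\cap W_0=k(V_0\cap W_0)k^{-1}=\{e\}$, and hence $V_1\cap W_0=\{e\}$. Setting $V=V_1K=KV_1$ (equality by conjugation invariance) and $W=W_0$, both are $K$-saturated, and any $v=v_1k\in V\cap W$ gives $v_1=vk^{-1}\in W$ (using $k^{-1}\in K\subset W$ and $W$ a subgroup), forcing $v_1\in V_1\cap W=\{e\}$ and $v=k\in K$.

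The second step is to pass to the quotient. Define $\delta^{*}=\delta\vee\pi^{-1}(\tau/K)$ on $G$, where $\pi\colon G\to G/K$ is the canonical projection; the pullback component makes $K$ closed in $\delta^{*}$ (since $G\setminus K$ is a $K$-saturated $\tau$-open set, hence $\pi^{-1}(\tau/K)$-open), so the quotient $\delta^{*}/K$ is a Hausdorff group topology on $G/K$. The $K$-saturated sets $V$ and $W$ descend to $V/K\in\delta^{*}/K$ and $W/K\in\tau/K$ with $V/K\cap W/K=(V\cap W)/K=\{eK\}$, witnessing the discreteness of $\delta^{*}/K\vee\tau/K$. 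The main obstacle will be verifying that $\delta^{*}/K$ is non-discrete, equivalently that $K$ is not $\delta^{*}$-open in $G$; this requires the natural prerequisite that $K\notin\tau$ (so $\tau/K$ is itself non-discrete and $G/K\in\mbox{{\bf Trans}}$ is meaningful), together with an argument exploiting the total disconnectedness of $G$ and the transversal structure of $\delta$ to exclude any $\delta^{*}$-basic neighborhood of $e$ from lying inside $K$. Once this non-discreteness is secured, $\delta^{*}/K$ is a non-discrete Hausdorff group topology on $G/K$ transversal to $\tau/K$, so $G/K\in\mbox{{\bf Trans}}$, proving $\blacktriangle(G,K)$.
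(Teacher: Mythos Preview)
Your candidate topology $\delta^{*}=\delta\vee\pi^{-1}(\tau/K)$ is \emph{discrete}, so the ``main obstacle'' you flag is not merely technical but fatal. Indeed, once you have arranged (as you do) a $\delta$-neighbourhood $V_{0}$ of $e$ and a $\tau$-open compact subgroup $W_{0}\supset K$ with $V_{0}\cap W_{0}=\{e\}$, the set $W_{0}$ is $K$-saturated and hence belongs to $\pi^{-1}(\tau/K)$; therefore $\{e\}=V_{0}\cap W_{0}\in\delta\vee\pi^{-1}(\tau/K)=\delta^{*}$. Thus $\delta^{*}/K$ is discrete and cannot witness $G/K\in\mbox{{\bf Trans}}$. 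A secondary gap: ``replacing $W_{0}$ by $KW_{0}$'' does not preserve $V_{0}\cap W_{0}=\{e\}$, since you are enlarging $W_{0}$; you must instead \emph{first} fix the compact open subgroup $L\supset K$ and \emph{then} produce $V$ with $V\cap L=\{e\}$.

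The paper avoids both problems by passing to the plain quotient $(G/K)_{\sigma}=\delta/K$ and invoking \cite[Corollary 3.14]{DTY2005}, which says that in a transversal pair every $\tau$-compact subset is $\delta$-closed and $\delta$-discrete. This single lemma does all the work you were trying to do by hand: applied to the pre-chosen compact open $L\supset K$ it yields a $\delta$-open $V$ with $V\cap L=\{e\}$ (hence $VK\cap L=K$, giving $(VK/K)\cap(L/K)=\{eK\}$); applied to $K$ it gives that $K$ is $\delta$-closed (so $\delta/K$ is Hausdorff) and that $K$ carries the discrete $\delta$-subspace topology (so if $K$ were $\delta$-open then $\{e\}$ would be $\delta$-open, contradicting non-discreteness of $\delta$; hence $\delta/K$ is non-discrete). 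Your $K$-saturation manoeuvre with $V_{1}=\bigcup_{k\in K}kV_{0}k^{-1}$ and $V=V_{1}K$ is correct and recovers the intersection $V\cap W\subset K$, but it becomes unnecessary once Corollary~3.14 is in hand, and in any case you should feed $V$ into $\delta/K$ rather than $\delta^{*}/K$.
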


\begin{proof}
Since $G$ is a locally compact totally disconnected group, it follows from \cite[Theorem 2]{P1987} that there exists a compact subgroup $L$ of $G$ such that $K\subset L$ and $L$ is open in $(G, \tau)$. Assume that $G\in\mbox{{\bf Trans}}$, then there exists a group topology $\sigma$ on $G$ transversal to $\tau$. We claim that $(G/K)_{\tau}$ and $(G/K)_{\sigma}$ are transversal, where $(G/K)_{\tau}$ and $(G/K)_{\sigma}$ denote the quotient group topologies under the group topologies $\tau$ and $\sigma$ respectively. Indeed, since $L$ is compact in $\tau$, it follows from \cite[Corollary 3.14]{DTY2005} that there exists an open neighborhood $V$ of $e$ in $\sigma$ such that $V\cap L=\{e\}$, hence $VL\cap L=\{e\}$, which implies that $(G/K)_{\tau}$ and $(G/K)_{\sigma}$ are transversal.
\end{proof}

\begin{question}
If $G$ is a locally pseudocompact totally disconnected topological group and $K$ is a compact normal subgroup of $G$, then does $G$ satisfy $\blacktriangle(G, K)$?
\end{question}

Finally, we give a central topological group $G$ such that $G$ admits a transversal group topology; however, it is not connected and does not contain any non-trivial discrete subgroup.

\begin{example}\label{e0}
There exists a central topological group $G$ satisfies the following conditions:

\smallskip
(1) $G$ admits a transversal group topology.

\smallskip
(2) $G$ contains an open compact subgroup $K$; hence $G/K$ is discrete.

\smallskip
(3) $G$ is not connected.

\smallskip
(4) $G$ does not contain any non-trivial discrete subgroup.
\end{example}

\begin{proof}
Let $p\geq 1$ be a prime. Let

\[G=\left\{\left[
\begin{array}{ccc}
    1 & b & a\\
    0 & 1 & c\\
    0 & 0 & 1\\
\end{array}
\right]: a\in\mathbf{Q}_{p}, b, c\in\Delta_{p}\right\},
\]
where $\mathbf{Q}_{p}$ is the locally compact ring of $p$-adic numbers and $\Delta_{p}$ is the ring of $p$-adic integers which is a compact open subring of $\mathbf{Q}_{p}$.

It follows from \cite[Example 5.4]{AK1997} that

\[Z(G)=\left\{\left[
\begin{array}{ccc}
    1 & 0 & a\\
    0 & 1 & 0\\
    0 & 0 & 1\\
\end{array}
\right]: a\in\mathbf{Q}_{p}\right\}.
\]
Put
\[K=\left\{\left[
\begin{array}{ccc}
    1 & b & 0\\
    0 & 1 & c\\
    0 & 0 & 1\\
\end{array}
\right]: a, b, c\in\Delta_{p}\right\}.
\]
Then it easily see that $G=KZ(G)$ and $K$ is an open compact normal subgroup of $G$, see \cite[Example 5.4]{AK1997}. Therefore, $G$ is a central topological group. Obviously, $Z(G)$ is topologically isomorphic to $\mathbf{Q}_{p}$, hence it follows from \cite[Corollary 3.7]{DTY2005} that $Z(G)$ admits a transversal group topology. Since $Z(G)$ is an abelian infinite subgroup, it follows from \cite[Theorem 3.4]{DTY2005} that $G$ admits a transversal group topology. Therefore, $G$ satisfies (1)-(3). Assume that $G$ contains a non-trivial discrete subgroup $H$. Take any $A\in H\setminus\{e\}$. Then there exist $B\in K$ and $C\in Z(G)$ such that $A=BC$. Then $A^{p^{n}}\in H\setminus\{e\}$ for each $n\in\mathbb{N}$. However, since $Z(G)$ is central, each $A^{p^{n}}=B^{p^{n}}C^{p^{n}}$. Then $B^{p^{n}}\rightarrow e$ and $C^{p^{n}}\rightarrow e$ as $n\rightarrow\infty$, hence $A^{p^{n}}\rightarrow e$ as $n\rightarrow\infty$, which is a contradiction.
\end{proof}

{\bf Note} The group $G$ in Example~\ref{e0} is not topologically isomorphic to any group of the form $F\times D$ by (4), where $D$ is a discrete group with $|D|>1$.


\begin{thebibliography}{99}
\bibitem{AK1997} M.R. Agrawal, U.B. Kanpur, {\it On existence of finite universial Korovkin sets in the centre of group Algebra}, Mh. Math, 123(1997): 1--20.

\bibitem{APTTW1998} O.T. Alas, I.V. Protasov, M.G. Tkachenko, V.V. Tkachuk, R.G. Wilson, I. Yaschenko, {\it Almost all submaximal groups are paracompact and $\sigma$-discrete}, Fund. Math., 156(1998): 241--260.

\bibitem{AT2008} A. Arhangel'ski\v{\i}, M. Tkachenko,
  {\it Topological groups and related structures}, Atlantis Press, Paris; World
  Scientific Publishing Co. Pte. Ltd., Hackensack, NJ, 2008.

\bibitem{B1936} G. Birkhoff, {\it On the combination of topologies}, Fund. Math., 26 (1936): 156--166.

\bibitem{D1998} D. Dikranjan, {\it Recent advances in minimal topological groups}, Topology Appl., 126(1998): 149--168.

\bibitem{DTY2005} D. Dikranjan, M. Tkachenko, I. Yaschenko, {\it On transversal group topologies}, Topology Appl., 153(2005): 786--817.

\bibitem{DTY2006} D. Dikranjan, M. Tkachenko, I. Yaschenko, {\it Transversal group topologies on non-abelian group}, Topology Appl., 153(2006): 3338--3354.

\bibitem{EKD1980} E.K. van Douwen, {\it The weight of a pseudocompact (homogeneous) space whose cardinality has countable cofinality}, Proc. Amer. Math. Soc., 80(4)(1980): 678--682.

\bibitem{E1989} R. Engelking, {\it General Topology} (revised and completed edition),
  Heldermann Verlag, Berlin, 1989.

\bibitem{GM1967} S. Grosser, M. Moskowitz, {\it On central topological groups}, Trans. Amer. Math. Soc., 127(1967): 317--340.

\bibitem{HM2006} K.H. Hofmann, S. Morris, {\it The structure of compact groups}, Wakter de Gruyter, Berlin New York, 2013.

\bibitem{P1987} M.R. Peyrovian, {\it Maximal Compact Normal Subgroups}, Proc. Amer. Math. Soc., 99(2)(1987): 389--394.

\bibitem{PS1984} I.R. Prodanov, L.N. Stoyanov, {\it Every minimal abelian group is precompact}, Dokl. Bulg. Acad. Sci., 37(1984): 23--26.

\bibitem{S1971} R.M. Stephenson Jr, {\it Minimal topological groups}, Math. Ann., 192(1971): 193--195.

\bibitem{TTWY2000} M.G. Tkacenko, V.V. Tkachuk, R.G. Wilson, I. Yaschenko, {\it No submaximal topology on a countable set is $T_{1}$-complementary}, Proc. Amer. Math. Soc., 128(1)(2000): 287--297.

\bibitem{W1937} A. Weil, {\it Sur les Espaces \`{a} Structure Uniforme et sur la Topologie G\'{e}n\'{e}rale}, Publications Math\'{e}matique de l'Universit\'{e} Strasbourg (Hermann $\$$ Cie, Paris, 1937).

\bibitem{ZP2001} E. Zelenyuk, I. Protasov, {\it Complemented topologies on abelian groups} (in Russian), Sibirsk. Mat. Zh., 42(3)(2001): 550--560; English transl. in: Siberian Math. J., 42(3)(2001): 465--472.
  \end{thebibliography}
  \end{document}